\theoremstyle{plain}
\newtheorem{theorem}{Theorem}
\newtheorem{lemma}[theorem]{Lemma}
\newtheorem{proposition}[theorem]{Proposition}
\newtheorem{corollary}[theorem]{Corollary}
\theoremstyle{definition}
\newtheorem{question}{Question}
\newcommand{\cl}{\mathsf{cl}}
\newcommand{\dom}{\mathsf{dom}}
\newcommand{\ran}{\mathsf{ran}}
\newcommand{\im}{\mathfrak{i}(\mathsf{meager})}
\newcommand{\nonm}{\mathsf{non(meager)}}
\newcommand{\addm}{\mathsf{add(meager)}}
\newcommand{\covm}{\mathsf{cov(meager)}}
\newcommand{\cofm}{\mathsf{cof(meager)}}
\newcommand{\Fs}{\mathsf{F_\sigma}}
\newcommand{\Gd}{\mathsf{G_\delta}}
\newcommand{\MAc}{\mathsf{MA(countable)}}
\newcommand{\ZFC}{\mathsf{ZFC}}
\newcommand{\CH}{\mathsf{CH}}
\newcommand{\Aa}{\mathcal{A}}
\newcommand{\DD}{\mathcal{D}}
\newcommand{\FF}{\mathcal{F}}
\newcommand{\GG}{\mathcal{G}}
\newcommand{\KK}{\mathcal{K}}
\newcommand{\XX}{\mathcal{X}}
\newcommand{\MM}{\mathcal{M}}
\newcommand{\RR}{\mathcal{R}}
\newcommand{\Ss}{\mathcal{S}}
\newcommand{\PPP}{\mathbb{P}}
\newcommand{\ZZZ}{\mathbb{Z}}
\newcommand{\QQQ}{\mathbb{Q}}
\newcommand{\cccc}{\mathfrak{c}}
\newcommand{\dddd}{\mathfrak{d}}
\begin{document}

\title{Non-meager free sets and independent families}

\author{Andrea Medini}
\address{Kurt G\"odel Research Center for Mathematical Logic
\newline\indent University of Vienna
\newline\indent W\"ahringer Stra{\ss}e 25
\newline\indent A-1090 Wien, Austria}
\email{andrea.medini@univie.ac.at}
\urladdr{http://www.logic.univie.ac.at/\~{}medinia2/}

\author{Du\v{s}an Repov\v{s}}
\address{Faculty of Education, and Faculty of Mathematics and Physics
\newline\indent University of Ljubljana
\newline\indent Kardeljeva Plo\v{s}\v{c}ad 16
\newline\indent Ljubljana, 1000, Slovenia}
\email{dusan.repovs@guest.arnes.si}
\urladdr{http://www.pef.uni-lj.si/repovs/}

\author{Lyubomyr Zdomskyy}
\address{Kurt G\"odel Research Center for Mathematical Logic
\newline\indent University of Vienna
\newline\indent W\"ahringer Stra{\ss}e 25
\newline\indent A-1090 Wien, Austria}
\email{lyubomyr.zdomskyy@univie.ac.at}
\urladdr{http://www.logic.univie.ac.at/\~{}lzdomsky/}

\keywords{Free set, meager relation, completely Baire, hereditarily Baire, independent family.}

\thanks{The first-listed author acknowledges the support of the FWF grant M 1851-N35. The second-listed author acknowledges the support of the SRA grant P1-0292-0101. The third-listed author acknowledges the support of the FWF grant I 1209-N25. The third-listed author also thanks the Austrian Academy of Sciences for its generous support through the APART Program.}

\date{June 20, 2017}

\begin{abstract}
Our main result is that, given a collection $\RR$ of meager relations on a Polish space $X$ such that $|\RR|\leq\omega$, there exists a dense Baire subspace $F$ of $X$ (equivalently, a nowhere meager subset $F$ of $X$) such that $F$ is $R$-free for every $R\in\RR$. This generalizes a recent result of Banakh and Zdomskyy. As an application, we show that there exists a non-meager independent family on $\omega$, and define the corresponding cardinal invariant. Furthermore, assuming Martin's Axiom for countable posets, our result can be strengthened by substituting ``$|\RR|\leq\omega$'' with ``$|\RR|<\cccc$'' and ``Baire'' with ``completely Baire''.
\end{abstract}

\maketitle

\section{Introduction}

Given a set $X$, we say that $R$ is a \emph{relation} on $X$ if $R\subseteq X^n$ for some $n=n_R$ such that $1\leq n<\omega$. By \emph{space} we mean separable metrizable topological space. A space is \emph{crowded} if it is non-empty and has no isolated points. A subset $S$ of a space $X$ is \emph{meager} if there exist closed nowhere dense subsets $C_k$ of $X$ for $k\in\omega$ such that $S\subseteq\bigcup_{k\in\omega}C_k$. A subset $S$ of a space $X$ is \emph{comeager} if $X\setminus S$ is meager. A space $X$ is \emph{Baire} if every non-empty open subset of $X$ is non-meager in $X$. A subset $S$ of a space $X$ is \emph{nowhere meager} if $S\cap U$ is non-meager in $X$ for every non-empty open subset $U$ of $X$. We will be freely using the following easy proposition (see \cite[Exercise A.13.7]{vanmill}).
\begin{proposition}
Let $X$ be a space. For a subset $S$ of $X$, the following conditions are equivalent:
\begin{itemize}
\item $S$ is nowhere meager in $X$.
\item $S$ is dense in $X$ and Baire as a subspace of $X$.
\end{itemize}
\end{proposition}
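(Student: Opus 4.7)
The plan is to reduce both directions to a single key observation: if $S$ is dense in $X$ and $A\subseteq S$, then $A$ is nowhere dense in $S$ if and only if $A$ is nowhere dense in $X$. This will let me freely translate between meagerness in $S$ and meagerness in $X$ for subsets of $S$.

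To establish this key observation, I use that $\overline{A}^S=\overline{A}^X\cap S$ by definition of the subspace topology. For the nontrivial implication, suppose $A$ is not nowhere dense in $S$, so there is a nonempty open $U\subseteq X$ with $U\cap S\subseteq\overline{A}^S\subseteq\overline{A}^X$. Since $S$ is dense, every point of $U$ is in $\overline{U\cap S}^X$, hence $U\subseteq\overline{A}^X$, so $A$ is not nowhere dense in $X$. The converse is immediate from $\overline{A}^S\subseteq\overline{A}^X$ together with density of $S$: a nonempty open set of $S$ inside $\overline{A}^S$ extends, by the same argument, to a nonempty open set of $X$ inside $\overline{A}^X$.

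For the forward direction of the proposition, assume $S$ is nowhere meager in $X$. Density is immediate since $S\cap U$ is non-meager (in particular nonempty) for every nonempty open $U\subseteq X$. To show $S$ is Baire, let $V$ be a nonempty relatively open subset of $S$, so $V=S\cap U$ for some open $U\subseteq X$. If $V$ were meager in $S$, witnessed by closed nowhere dense sets $C_k\subseteq S$, then by the key observation each $\overline{C_k}^X$ is closed nowhere dense in $X$, witnessing that $S\cap U$ is meager in $X$, contradicting nowhere meagerness.

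For the backward direction, assume $S$ is dense and Baire, and let $U\subseteq X$ be nonempty and open; then $S\cap U$ is nonempty and relatively open in $S$, hence non-meager in $S$ by the Baire property. If $S\cap U$ were meager in $X$, say covered by closed nowhere dense $C_k\subseteq X$, then each $C_k\cap S$ is closed in $S$ and nowhere dense in $S$ by the key observation, so $S\cap U$ would be meager in $S$, a contradiction. The only slightly subtle point, and the one I would want to state carefully, is the key observation about nowhere dense sets in dense subspaces; once that is in hand, both directions are immediate.
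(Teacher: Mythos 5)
The paper does not prove this proposition; it cites it as \cite[Exercise A.13.7]{vanmill}, so there is no in-text argument to compare against. Your proof is correct and is the standard one: reducing everything to the observation that for a dense subspace $S$ and $A\subseteq S$, nowhere density of $A$ in $S$ and in $X$ coincide, is exactly the right move, and both directions then follow as you describe. One small expository slip: your justification of the ``converse'' of the key observation restates the direction you had just proved; the direction actually needed there --- a nonempty open $U\subseteq\overline{A}^{X}$ meets $S$ in a nonempty relatively open set contained in $\overline{A}^{X}\cap S=\overline{A}^{S}$ --- is the genuinely immediate one, so this does not affect correctness.
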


A relation $R$ on a space $X$ is \emph{meager} if $R$ is a meager subset of $X^n$, where $n=n_R$. Given a relation $R$ on a set $X$, we say that $F\subseteq X$ is \emph{$R$-free} if $x\notin R$ whenever $x:n_R\longrightarrow F$ is injective. Given a collection $\RR$ consisting of relations on a set $X$, we say that $F\subseteq X$ is \emph{$\RR$-free} if $F$ is $R$-free for every $R\in\RR$.

The following result (see \cite[Section 6]{kuratowski} or \cite[Exercise 8.8 and Theorem 19.1]{kechris}) has become a standard tool in mathematics. It guarantees the existence of nice free sets for a small number of small relations.
\begin{theorem}[Kuratowski]\label{perfect}
Let $\RR$ be a collection of meager relations on a crowded Polish space $X$ such that $|\RR|\leq\omega$.\footnote{\,In fact, it is clear from the proof that ``$|\RR|\leq\omega$'' can be weakened to ``$|\RR|<\covm$''. Furthermore, the example at the beginning of Section 8 shows that the bound $\covm$ is optimal.} Then there exists $F\subseteq X$ that satisfies the following conditions.
\begin{itemize}
\item $F$ is homeomorphic to $2^\omega$.
\item $F$ is $\RR$-free.
\end{itemize}
\end{theorem}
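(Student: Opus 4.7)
The plan is to build a Cantor scheme $\{U_s : s \in 2^{<\omega}\}$ of non-empty open subsets of $X$ with $\overline{U_{s\frown i}} \subseteq U_s$, $\diam(U_s) < 1/(|s|+1)$, and pairwise disjoint closures at each level, so that
\[
F = \bigcap_{n \in \omega} \bigcup_{s \in 2^n} \overline{U_s}
\]
is homeomorphic to $2^\omega$ (via $\sigma \mapsto$ the unique element of $\bigcap_n \overline{U_{\sigma|n}}$) and $\RR$-free. Crowdedness of $X$ ensures that the splitting step is available at each stage.

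To handle freeness, enumerate $\RR = \{R_k : k \in \omega\}$ and, using meagerness, write each $R_k = \bigcup_{m \in \omega} N_{k,m}$ as a countable union of closed nowhere dense subsets of $X^{n_{R_k}}$. I would aim for the scheme to additionally satisfy, for every $n$, every $k,m < n$, and every tuple $(s_1,\ldots,s_{n_{R_k}})$ of pairwise distinct elements of $2^n$,
\[
\bigl(\overline{U_{s_1}} \times \cdots \times \overline{U_{s_{n_{R_k}}}}\bigr) \cap N_{k,m} = \emptyset.
\]

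The recursion step from level $n$ to level $n+1$ proceeds by first choosing preliminary non-empty open sets $V_{s\frown i} \subseteq U_s$ with small diameter and disjoint closures, and then processing the finitely many triples $(k,m,(s_1,\ldots,s_{n_{R_k}}))$ with $k,m < n+1$ and $s_1,\ldots,s_{n_{R_k}} \in 2^{n+1}$ pairwise distinct, one at a time. For each such triple, the product $V_{s_1} \times \cdots \times V_{s_{n_{R_k}}}$ is a non-empty open subset of $X^{n_{R_k}}$; since $N_{k,m}$ is closed nowhere dense there, we may pick a basic open box $W_1 \times \cdots \times W_{n_{R_k}}$ whose closure lies inside it and avoids $N_{k,m}$, then replace each $V_{s_i}$ by $W_i$. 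Each subsequent shrinking preserves the disjointness conditions already secured, so at the end setting $U_s := V_s$ completes the step.

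For $\RR$-freeness, given $R_k \in \RR$ and distinct $x_1,\ldots,x_{n_{R_k}} \in F$, fix any $m$; choosing $n > \max(k,m)$ large enough that each $x_i$ lies in a distinct level-$n$ branch $\overline{U_{s_i}}$ (possible because the diameters shrink to zero and closures at each level are disjoint), the added condition gives $(x_1,\ldots,x_{n_{R_k}}) \notin N_{k,m}$, and letting $m$ vary yields $(x_1,\ldots,x_{n_{R_k}}) \notin R_k$. The main point to watch is that within one recursion step the same string $s$ appears in many distinct-tuple configurations and hence gets shrunk multiple times; since there are only finitely many updates per step and each preserves non-emptiness, this is a bookkeeping rather than a structural obstacle.
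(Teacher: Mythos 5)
Your proposal is correct: the Cantor scheme with diagonal avoidance of the closed nowhere dense pieces $N_{k,m}$ across all injective tuples of level-$n$ nodes is exactly the classical argument behind this theorem, which the paper does not reprove but cites from Kuratowski and Kechris. The only cosmetic point is that, with the paper's definition of meager, one should write $R_k\subseteq\bigcup_{m}N_{k,m}$ rather than equality, which changes nothing in the verification of freeness.
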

Our main results (Theorems \ref{mainpolish} and \ref{maincbpolish}) are in the same vein, except that by ``nice'' we will mean respectively ``nowhere meager'' and ``dense and completely Baire'' instead of ``homeomorphic to $2^\omega$''. Theorem \ref{mainpolish} generalizes (and was inspired by) a recent result of Banakh and Zdomskyy, which concerns the case $\RR=\{R\}$, where $R$ is a binary relation (see \cite[Theorem 1]{banakhzdomskyy}). First, we will give proofs for crowded Polish spaces (see Theorems \ref{main} and \ref{maincb}). As an application, we will show that there exists a non-meager independent family in $\ZFC$ (see Theorem \ref{nonmeagerindependent}).

\section{More notation and terminology}

Assume that a countably infinite set $I$ is given (in most cases $I=\omega$). Throughout this paper, we freely identify subsets of $I$ with their characteristic functions. Accordingly, we say that a collection of subsets of $I$ is \emph{meager} if it is meager as a subset of $2^I$. Given a collection $\XX\subseteq 2^I$ and $A\subseteq I$, define
$$
\XX\upharpoonright A=\{x\upharpoonright A:x\in\XX\}.
$$

A \emph{filter} on $I$ is a collection of non-empty subsets of $I$ that is closed under finite intersections and supersets. Furthermore, we assume that $\{x\subseteq I:|I\setminus x|<\omega\}\subseteq\FF$ for every filter $\FF$ on $I$. If the set $I$ is not mentioned, we will assume that $I=\omega$. We will freely use the fact that a filter on $I$ is non-meager if and only if it is Baire as a subspace of $2^I$ (see for example \cite[Section 2]{medinimilovich}).

Given $x\subseteq I$, define $x^0=I\setminus x$ and $x^1=x$. An \emph{independent family} on $I$ is a collection $\Aa$ consisting of subsets of $I$ such that $\bigcap_{x\in F}x^{\nu(x)}$ is infinite for every non-empty $F\in [\Aa]^{<\omega}$ and $\nu:F\longrightarrow 2$. Once again, if the set $I$ is not mentioned, we will assume that $I=\omega$. Notice that an independent family might be Baire as a subspace of $2^\omega$ without being non-meager. In fact, it is well-known that Theorem \ref{perfect} implies the existence of independent families that are homeomorphic to $2^\omega$ (see the proof of Theorem \ref{nonmeagerindependent}), and these are necessarily closed nowhere dense in $2^\omega$.

A space $X$ is \emph{completely Baire}\footnote{\,Some authors use ``hereditarily Baire'' or even ``hereditary Baire'' instead of ``completely Baire''.} if every closed subspace of $X$ is Baire. The following classical result (see \cite[Corollary 21.21]{kechris} and \cite[Corollary 1.9.13]{vanmill}) collects the most important facts about completely Baire spaces. See also Theorem \ref{newcb}.
\begin{theorem}[Hurewicz]\label{hurewicz}
Let $X$ be a space. Consider the following conditions:
\begin{enumerate}
\item\label{polishcb} $X$ is Polish.
\item\label{cb} $X$ is completely Baire.
\item\label{noQ} $X$ does not contain a closed copy of $\QQQ$.
\end{enumerate}
The implications $(\ref{polishcb})\rightarrow(\ref{cb})\leftrightarrow (\ref{noQ})$ hold for every $X$. If $X$ is a coanalytic subspace of some Polish space then the implication $(\ref{polishcb}) \leftarrow (\ref{cb})$ holds as well.
\end{theorem}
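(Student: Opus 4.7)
The plan is to prove the implications separately, since each has a different flavor. The implication $(\ref{polishcb}) \Rightarrow (\ref{cb})$ is immediate: a closed subspace of a Polish space is itself Polish, hence Baire by the Baire Category Theorem. The implication $(\ref{cb}) \Rightarrow (\ref{noQ})$ is equally quick: $\QQQ$ is countable and crowded, so it equals the union of its closed, nowhere dense singletons, witnessing that $\QQQ$ is meager in itself and therefore not Baire; hence no closed copy of $\QQQ$ can live inside a completely Baire space.

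For the reverse direction $(\ref{noQ}) \Rightarrow (\ref{cb})$ I would argue by contraposition. If some closed subspace $Y$ of $X$ is not Baire, there is a non-empty open $U \subseteq Y$ and closed nowhere dense sets $C_n$ of $Y$ with $U \subseteq \bigcup_n C_n$. Inside $U$ I would construct a Cantor scheme $\{V_s : s \in \omega^{<\omega}\}$ of non-empty open sets together with points $x_s \in V_s$, arranging that $\overline{V_{s^\frown n}} \subseteq V_s$, that the family $\{V_{s^\frown n} : n \in \omega\}$ is pairwise disjoint with dense union in $V_s$, that $\diam(V_s)\to 0$ along every branch, that $V_s \cap C_k = \emptyset$ for $k<|s|$ (possible since each $C_k$ is nowhere dense in $Y$), and that $x_{s^\frown n} \to x_s$. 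The set $D=\{x_s:s\in\omega^{<\omega}\}$ is then countable, crowded and metrizable, hence homeomorphic to $\QQQ$ by Sierpi\'nski's characterization. Any accumulation point of $D$ lying in $Y\setminus D$ would have to belong to $\bigcap_m \overline{V_{\alpha\upharpoonright m}}$ for some branch $\alpha\in\omega^\omega$, and by the diameter and nowhere-dense conditions such a point would be a single element of $\overline{U}$ disjoint from every $C_k$, hence outside $U\subseteq\bigcup_n C_n$. The main bookkeeping is to arrange the tree so that this last possibility cannot arise, i.e.\ so that $D$ is genuinely closed in $Y$; this is the technical heart of the argument.

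The remaining implication $(\ref{cb}) \Rightarrow (\ref{polishcb})$ under the coanalytic hypothesis is Hurewicz's classical dichotomy, and is the main obstacle of the theorem. My plan here would simply be to invoke it from \cite[Corollary 21.21]{kechris}: the standard proof works inside a Polish completion, represents $X$ as a $\Pi^1_1$ set there, and uses a Choquet-game / body-of-a-tree analysis of the analytic complement to extract either a Polish structure on $X$ or a closed copy of $\QQQ$ inside it. I would not attempt to reprove this in the present paper.
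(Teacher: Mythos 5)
The paper offers no proof of this theorem: it is quoted as a classical fact with pointers to \cite[Corollary 21.21]{kechris} and \cite[Corollary 1.9.13]{vanmill}. Your arguments for $(1)\rightarrow(2)$ and $(2)\rightarrow(3)$ are complete and correct, and deferring the coanalytic dichotomy $(2)\rightarrow(1)$ to Kechris is exactly what the paper itself does. The problem is your sketch of $(3)\rightarrow(2)$, where you attempt a genuine argument but the list of conditions you impose on the scheme $\{V_s\}$ does not suffice to make $D=\{x_s:s\in\omega^{<\omega}\}$ closed in $Y$ --- and you flag precisely this point as ``the technical heart'' without supplying it, so the implication is not actually proved.

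Concretely: you do correctly rule out accumulation points lying along a single branch. (A point of $\bigcap_m\overline{V_{\alpha\upharpoonright m}}$ lies in every $V_{\alpha\upharpoonright m}$ by the nesting $\overline{V_{s^{\frown}n}}\subseteq V_s$, hence in $V_{\varnothing}\subseteq U$ while avoiding every $C_k$ by your condition $V_s\cap C_k=\varnothing$ for $k<|s|$; this contradicts $U\subseteq\bigcup_k C_k$, and the worry about $\overline{U}$ versus $U$ does not even arise.) But accumulation of $D$ need not happen along a branch. With only ``$x_{s^{\frown}n}\to x_s$'', pairwise disjointness, and diameters tending to $0$ along branches, the level-one sets $V_{\langle n\rangle}$ may each have large diameter, and a diagonal sequence $x_{\langle n,k_n\rangle}\in V_{\langle n\rangle}$ can converge to a point of $\overline{V_{\varnothing}}$ that is neither some $x_s$ nor a branch limit. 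So the claim that every accumulation point of $D$ outside $D$ belongs to some $\bigcap_m\overline{V_{\alpha\upharpoonright m}}$ is false as stated. The standard repair is to strengthen the convergence requirement from points to sets: every neighbourhood of $x_s$ must contain all but finitely many of the sets $\overline{V_{s^{\frown}n}}$. With that extra clause one shows by induction on the tree that $\overline{D}\subseteq D\cup\bigcup_{\alpha\in\omega^{\omega}}\bigcap_m\overline{V_{\alpha\upharpoonright m}}$, and the union over branches is empty by the argument above, so $D$ is a closed copy of $\QQQ$ in $Y$ and hence in $X$. Either add this condition and carry out the induction, or follow the paper and simply cite \cite[Corollary 1.9.13]{vanmill} for this implication as well.
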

\newpage
Recall the following definitions:
\begin{itemize}
\item $\addm$ is the minimal size of a collection $\MM$ consisting of meager subsets of $2^\omega$ such that $\bigcup\MM$ is non-meager.
\item $\cofm$ is the minimal size of a collection $\MM$ consisting of meager subsets of $2^\omega$ such that for every meager subset $N$ of $2^\omega$ there exists $M\in\MM$ such that $N\subseteq M$.
\item $\covm$ is the minimal size of a collection $\MM$ consisting of meager subsets of $2^\omega$ such that $\bigcup\MM=2^\omega$.
\item $\nonm$ is the minimal size of a non-meager subset of $2^\omega$.
\item $\dddd$ is the minimal size of a family $\FF\subseteq\omega^\omega$ such that for every $f\in\omega^\omega$ there exists $g\in\FF$ such that $f(n)\leq g(n)$ for all but finitely many values of $n$.
\end{itemize}
Since every crowded Polish space has a dense subspace homeomorphic to $\omega^\omega$ (see the first paragraph of the proof of \cite[Theorem 5.4]{medinizdomskyy} and \cite[Theorem 1.9.8]{vanmill}), it is easy to see that $2^\omega$ could have been substituted with any other crowded Polish space in the above definitions. Notice that $\dddd$ is the minimal size of a family $\KK$ consisting of compact subsets of $\omega^\omega$ such that $\bigcup\KK=\omega^\omega$. The inequalities $\addm\leq\nonm\leq\cofm$ and $\addm\leq\covm\leq\dddd\leq\cofm$ are well-known (see \cite[Section 5]{blass}). We denote by $\MAc$ the statement that Martin's Axiom holds for countable posets, which is equivalent to $\covm=\cccc$ (see \cite[Theorem 7.13]{blass}).

\section{Preliminaries on non-meager filters}

In this section, we collect all the preliminaries on non-meager filters that will be needed in the next section. All these results are well-known.

Recall that a function $\phi:I\longrightarrow J$ is \emph{finite-to-one} if $\phi^{-1}(j)$ is finite for every $j\in J$. Notice that every finite-to-one function $\phi:I\longrightarrow\omega$ induces a partition of $I$ into finite sets, namely $\{\phi^{-1}(j):j\in\omega\}\setminus\{\varnothing\}$. Conversely, given a partition $\{I_j:j\in\omega\}$ of $I$ into finite sets, setting $\phi(i)=j$ for every $i\in I_j$ yields a finite-to-one function $\phi:I\longrightarrow\omega$. Given a countably infinite set $I$, a finite-to-one $\phi:I\longrightarrow\omega$ and $x,y\in 2^I$, we will use the notation
$$
[[x=y]]=\{j\in\omega:x\upharpoonright\phi^{-1}(j)=y\upharpoonright\phi^{-1}(j)\}.
$$
The above set obviously depends on $\phi$, but what $\phi$ is will always be clear from the context. The following two results are immediate consequences of \cite[Theorem 5.2]{blass} and \cite[Proposition 9.4]{blass}. Corollary \ref{charmeagerfilter} originally appeared, with a slightly different formulation, as part of \cite[Th\'eor\`eme 21]{talagrand}.
\begin{theorem}\label{charmeager}
Let $I$ be a countably infinite set. For a subset $S$ of $2^I$, the following conditions are equivalent:
\begin{itemize}
\item $S$ is meager.
\item There exist a finite-to-one $\phi:I\longrightarrow\omega$ and $z\in 2^I$ such that $[[x=z]]$ is finite for every $x\in S$.
\end{itemize}
\end{theorem}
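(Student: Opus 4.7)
The plan is to establish both directions, with the forward one (meager $\Rightarrow$ combinatorial condition) being the substantive part; this is essentially Talagrand's theorem. For the easy converse, let $I_j = \phi^{-1}(j)$ and observe that $\{x \in 2^I : [[x = z]] \text{ is finite}\}$ equals $\bigcup_{n \in \omega} \bigcap_{j \geq n}\{x \in 2^I : x \upharpoonright I_j \neq z \upharpoonright I_j\}$. Each inner intersection is closed, and is nowhere dense because any basic clopen set determined by a finite partial function $t$ admits a subcylinder forcing agreement with $z$ on some $I_j$ disjoint from $\dom(t)$ (such $j$ exists since $\phi$ is finite-to-one, so only finitely many $I_{j'}$ meet the finite set $\dom(t)$).

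For the hard direction, I would assume $I = \omega$ and write the meager set as $\bigcup_k C_k$ with $C_k$ an increasing chain of closed nowhere dense subsets of $2^\omega$. The plan is to build $0 = n_0 < n_1 < \cdots$ and $z \in 2^\omega$ by induction on $j$, then let $\phi^{-1}(j) = [n_j, n_{j+1})$. The key recursive step is: given $n_j$ and $z \upharpoonright n_j$, choose $n_{j+1}$ and $z \upharpoonright [n_j, n_{j+1})$ so that for every $s \in 2^{n_j}$ and every $k \leq j$, no element of $C_k$ extends the concatenation $s\,{}^\frown(z \upharpoonright [n_j, n_{j+1}))$. This is achieved by a finite sub-induction over the finitely many pairs $(s, k)$: at each micro-step the nowhere-density of $C_k$ lets us further extend the current finite initial segment of $z$ on $[n_j, -)$ so that the relevant cylinder becomes disjoint from $C_k$, and since extending only shrinks cylinders, previously secured disjointness is preserved.

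With $\phi$ and $z$ in hand, verification is immediate: for $x \in C_k$ and $j \geq k$, applying the stage-$j$ guarantee with $s = x \upharpoonright n_j$ forces $x \upharpoonright [n_j, n_{j+1}) \neq z \upharpoonright [n_j, n_{j+1})$, so $[[x = z]] \subseteq \{0, \ldots, k - 1\}$ is finite. The main obstacle I anticipate is showing that the finite collection of avoidance requirements indexed by $2^{n_j} \times \{0, \ldots, j\}$ can be met simultaneously by a single choice of $z \upharpoonright [n_j, n_{j+1})$; the nested-cylinders observation is exactly what makes the simple finite iteration work.
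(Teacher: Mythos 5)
Your argument is correct. The paper does not prove Theorem \ref{charmeager} but cites it as an immediate consequence of \cite[Theorem 5.2]{blass}, and your proof (the easy direction via closed nowhere dense tails, the hard direction via the interval construction with a finite sub-induction over $2^{n_j}\times\{0,\ldots,j\}$ using nested cylinders) is exactly the standard argument given in that reference, so there is nothing to add beyond routine bookkeeping (e.g.\ ensuring $n_{j+1}>n_j$ even when no extension is forced).
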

\begin{corollary}[Talagrand]\label{charmeagerfilter}
Let $I$ be a countably infinite set. For a filter $\FF$ on $I$, the following conditions are equivalent:
\begin{itemize}
\item $\FF$ is meager.
\item There exists a finite-to-one $\phi:I\longrightarrow\omega$ such that $[[x=z]]$ is finite for every $x\in\FF$, where $z\in 2^I$ is defined by $z(i)=0$ for every $i\in I$.
\end{itemize}
\end{corollary}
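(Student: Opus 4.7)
The ``if'' direction is immediate from Theorem \ref{charmeager} applied to $S=\FF$ and the specified $z$, so I will concentrate on the ``only if'' direction. My strategy is to start with the witnesses $\phi_0$ and $z_0$ supplied by Theorem \ref{charmeager} and then exploit the fact that $\FF$ is closed under supersets to replace $z_0$ with the zero function, at the price of refining the partition induced by $\phi_0$.

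Let $Z\subseteq I$ be the set whose characteristic function is $z_0$. For every $x\in\FF$, upward closure gives $x\cup Z\in\FF$, and a direct unpacking shows that $(x\cup Z)\upharpoonright\phi_0^{-1}(j)=z_0\upharpoonright\phi_0^{-1}(j)$ is equivalent to $x\cap(\phi_0^{-1}(j)\setminus Z)=\varnothing$. Hence Theorem \ref{charmeager} tells me that
\[
\{j\in\omega:x\cap(\phi_0^{-1}(j)\setminus Z)=\varnothing\}\text{ is finite for every }x\in\FF.
\]
Applied to $x=I$ (which lies in $\FF$ because $\FF$ contains the cofinite sets), this already shows that $\phi_0^{-1}(j)\subseteq Z$ for only finitely many $j$. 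Merging those exceptional blocks into a neighboring one does not affect the displayed finiteness statement, so I may assume that $I_j:=\phi_0^{-1}(j)\setminus Z$ is non-empty for every $j$.

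The desired $\phi$ is now essentially forced on me: the $I_j$ partition $I\setminus Z$ into non-empty finite sets, and I simply distribute the points of $Z$ among the $I_j$ (arbitrarily, keeping each block finite) to obtain a finite-to-one $\phi:I\longrightarrow\omega$ with $\phi^{-1}(j)\supseteq I_j$. Since $[[x=z]]$ with $z$ the zero function is exactly $\{j:x\cap\phi^{-1}(j)=\varnothing\}\subseteq\{j:x\cap I_j=\varnothing\}$, the required finiteness follows from the previous paragraph.

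The whole argument is driven by upward closure of $\FF$, and I do not anticipate any serious obstacle. The only mildly technical point is the bookkeeping required to absorb both $Z$ and the finitely many exceptional blocks of $\phi_0$ into the new partition without spoiling the finiteness condition.
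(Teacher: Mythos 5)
Your argument is correct: the upward-closure trick of replacing $x$ by $x\cup Z$ to trade the arbitrary $z_0$ from Theorem \ref{charmeager} for the zero function, followed by absorbing the finitely many blocks contained in $Z$ and redistributing the points of $Z$, is exactly the standard derivation of Talagrand's characterization from the general description of meager subsets of $2^I$. The paper gives no proof beyond citing Blass, so your write-up simply supplies the routine details it leaves to the reader, and the bookkeeping (finitely many exceptional blocks, finite-to-one-ness of the new $\phi$, and the inclusion $[[x=z]]\subseteq\{j:x\cap I_j=\varnothing\}$) all checks out.
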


Let $I$ and $J$ be countably infinite sets. Given a finite-to-one $\phi:I\longrightarrow J$ and a filter $\FF$ on $J$, define
$$
\phi^{-1}(\FF)=\{A\subseteq I:\phi^{-1}[B]\subseteq A\textrm{ for some }B\in\FF\}.
$$
The following three lemmas are simple applications of Corollary \ref{charmeagerfilter}, and their proofs are left to the reader.

\begin{lemma}\label{restrictfilter}
Let $\FF$ be a non-meager filter, and fix $A\in\FF$. Then $\FF\upharpoonright A$ is a non-meager filter on $A$.
\end{lemma}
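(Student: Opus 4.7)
The plan is to split the lemma into two parts: a routine verification that $\FF \upharpoonright A$ is a filter on $A$, and a topological argument that it is non-meager in $2^A$.

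For the filter part, I would note that $\FF \upharpoonright A = \{B \cap A : B \in \FF\}$ (identifying sets with their characteristic functions). Since $A \in \FF$, we have $B \cap A \in \FF$ for every $B \in \FF$, so every member of $\FF \upharpoonright A$ is non-empty. Closure under finite intersections is inherited from $\FF$. If $B \cap A \subseteq C \subseteq A$, then $B \cup C \in \FF$ and $(B \cup C) \cap A = C$, giving closure under supersets within $A$. Finally, for any $C \subseteq A$ with $A \setminus C$ finite, $C \cup (I \setminus A)$ is cofinite in $I$, hence belongs to $\FF$, and restricts to $C$.

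For non-meagerness I would exploit the canonical homeomorphism $2^I \cong 2^A \times 2^{I \setminus A}$, under which the restriction map $x \mapsto x \upharpoonright A$ corresponds to the first-coordinate projection $\pi$. This projection pulls meager sets back to meager sets: if $M \subseteq \bigcup_n C_n$ in $2^A$ with each $C_n$ closed nowhere dense, then $\pi^{-1}(M) \subseteq \bigcup_n (C_n \times 2^{I \setminus A})$, and each $C_n \times 2^{I \setminus A}$ is closed nowhere dense in the product. Consequently, if $\FF \upharpoonright A = \pi(\FF)$ were meager, then $\FF \subseteq \pi^{-1}(\pi(\FF))$ would be meager too, contradicting the hypothesis.

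There is essentially no serious obstacle here; everything reduces to the standard fact that projection out of a product preserves non-meagerness of images. Alternatively, and more in line with the paper's framing, one can argue directly from Corollary \ref{charmeagerfilter}: given a finite-to-one $\phi_A : A \to \omega$ witnessing meagerness of $\FF \upharpoonright A$, extend it to a finite-to-one $\phi : I \to \omega$ that agrees with $\phi_A$ on $A$ (placing the points of $I \setminus A$ in distinct fibers), and observe that $[[y = z]]_\phi \subseteq [[(y \cap A) = z_A]]_{\phi_A}$ for every $y \in \FF$, so the same $\phi$ witnesses meagerness of $\FF$.
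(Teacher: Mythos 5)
Your proposal is correct. Note that the paper does not actually give a proof of Lemma \ref{restrictfilter}: it states only that the lemma is a ``simple application of Corollary \ref{charmeagerfilter}'' and leaves the details to the reader. Your closing alternative argument --- take $\phi_A:A\longrightarrow\omega$ witnessing meagerness of $\FF\upharpoonright A$, extend it to a finite-to-one $\phi:I\longrightarrow\omega$, and observe that $[[y=z]]\subseteq[[(y\cap A)=z_A]]$ so that $\phi$ witnesses meagerness of $\FF$ --- is precisely the intended route, and it is carried out correctly. Your primary argument via the homeomorphism $2^I\cong 2^A\times 2^{I\setminus A}$ is a genuinely different and in fact more general route: since the open projection pulls meager sets back to meager sets, it shows that the restriction to $A$ of \emph{any} non-meager subset of $2^I$ is non-meager in $2^A$, with no filter structure and no hypothesis $A\in\FF$ needed for that half; the hypothesis $A\in\FF$ is used only in your (also correct) verification that $\FF\upharpoonright A$ is a filter on $A$, i.e.\ that its members are non-empty and that it is closed under supersets and contains the cofinite subsets of $A$. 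The only point worth making explicit is that $A$ is necessarily infinite (otherwise $A\cap(I\setminus A)=\varnothing$ would contradict $A\in\FF$ together with the convention that filters contain all cofinite sets), so that ``filter on $A$'' and Corollary \ref{charmeagerfilter} make sense for $\FF\upharpoonright A$; this is immediate, and your proof is complete.
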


\begin{lemma}\label{productfilter}
Let $\FF_\ell$ be a non-meager filter for $\ell\in\omega$. Then $\bigcap_{\ell\in\omega}\FF_\ell$ is a non-meager filter.
\end{lemma}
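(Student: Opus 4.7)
The plan is to first verify that $\FF:=\bigcap_{\ell\in\omega}\FF_\ell$ is a filter, which is immediate from the definition: it contains all cofinite sets (because each $\FF_\ell$ does), is closed under finite intersections and supersets, and its elements are non-empty since they lie in $\FF_0$. The substantive content is showing that $\FF$ is non-meager, for which I will verify Talagrand's criterion (Corollary~\ref{charmeagerfilter}): given an arbitrary finite-to-one $\phi:\omega\to\omega$ with blocks $I_j:=\phi^{-1}(j)$, I need to produce $B\in\FF$ such that $B\cap I_j=\varnothing$ for infinitely many $j$.

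I would construct $B$ as a union $\bigcup_{\ell\in\omega}A_\ell$, built by recursion together with strictly increasing integers $n_0<n_1<\cdots$, in such a way that $A_\ell\in\FF_\ell$ and $A_m\cap I_{n_k}=\varnothing$ for all $m,k\in\omega$. Such a $B$ automatically belongs to every $\FF_\ell$ (filters are closed under supersets) and satisfies $B\cap I_{n_k}=\varnothing$ for every $k$, which is what Talagrand's criterion demands.

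The recursion maintains, after stage $\ell$, the invariant that $\MM_\ell:=\{j\in\omega:A_m\cap I_j=\varnothing\text{ for all }m\leq\ell\}$ is infinite and contains $\{n_0,\ldots,n_\ell\}$. At stage $\ell+1$, I enumerate $\MM_\ell$ in increasing order as $\{m_0<m_1<\cdots\}$ and form the coarser partition $I''_n:=\bigcup\{I_j:m_{n-1}<j\leq m_n\}$ (with $m_{-1}:=-1$), then apply Corollary~\ref{charmeagerfilter} to the non-meager filter $\FF_{\ell+1}$ to obtain $\tilde{A}_{\ell+1}\in\FF_{\ell+1}$ missing $I''_n$ for infinitely many $n$. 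Since $I_{m_n}\subseteq I''_n$, this $\tilde{A}_{\ell+1}$ misses infinitely many blocks indexed by $\MM_\ell$. Setting $A_{\ell+1}:=\tilde{A}_{\ell+1}\setminus\bigcup_{k\leq\ell}I_{n_k}$ keeps us inside $\FF_{\ell+1}$ (intersection with a cofinite set preserves any filter), forces $A_{\ell+1}\cap I_{n_k}=\varnothing$ for all $k\leq\ell$, and leaves $\MM_{\ell+1}$ infinite; I then pick $n_{\ell+1}\in\MM_{\ell+1}$ above $n_\ell$, closing the induction.

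The technical heart of the argument is the repackaging of $\{I_j\}$ into the coarser partition $\{I''_n\}$ at each stage. Direct application of Corollary~\ref{charmeagerfilter} to each $\FF_\ell$ separately yields some $A_\ell$ missing some infinite set of blocks, but the naive intersection across $\ell$ of these missed sets might be finite, so the $A_\ell$'s cannot simply be assembled. The refinement of the partition is exactly what allows the non-meagerness of $\FF_{\ell+1}$ to be steered toward blocks already missed by $A_0,\ldots,A_\ell$, which is precisely what the diagonal construction requires.
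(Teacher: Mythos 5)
Your proof is correct, and it follows exactly the route the paper intends: the paper leaves this lemma to the reader as a ``simple application of Corollary~\ref{charmeagerfilter}'', and your diagonal construction --- repeatedly coarsening the partition so that each $\FF_{\ell+1}$ can be made to miss infinitely many of the blocks already missed by $A_0,\ldots,A_\ell$, then taking $B=\bigcup_{\ell\in\omega}A_\ell$ --- is precisely such an application. No gaps; the only (trivial) omissions are the base case of the recursion and the routine check that the intersection is a filter, both of which you indicate correctly.
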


\begin{lemma}\label{inversefilter}
Let $I$ and $J$ be countably infinite sets. Fix a finite-to-one function $\psi:I\longrightarrow J$. Let $\FF$ be a non-meager filter on $J$. Then $\psi^{-1}(\FF)$ is a non-meager filter on $I$.
\end{lemma}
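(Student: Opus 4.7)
The approach is to apply Talagrand's characterization, Corollary~\ref{charmeagerfilter}, to the given filter $\FF$ on $J$ and to the candidate filter $\psi^{-1}(\FF)$ on $I$. First I would verify that $\psi^{-1}(\FF)$ is a filter on $I$: closure under supersets is built into the definition, closure under intersections follows from $\psi^{-1}[B_1\cap B_2]=\psi^{-1}[B_1]\cap\psi^{-1}[B_2]$, and $\varnothing\notin\psi^{-1}(\FF)$ since every element of $\FF$ meets $\ran(\psi)$. The non-trivial content is non-meagerness: by the contrapositive of Corollary~\ref{charmeagerfilter}, it suffices to show that, given any finite-to-one $\phi':I\longrightarrow\omega$, there exists $A\in\psi^{-1}(\FF)$ such that $[[A=z_I]]$ is infinite, where $z_I$ is the constant $0$ function on $I$.

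For each $k\in\omega$ I would set $Y_k:=\psi[(\phi')^{-1}(k)]$. Each $Y_k$ is a finite subset of $J$, and since $\psi$ is finite-to-one, every $j\in J$ lies in at most $|\psi^{-1}(j)|$ of the $Y_k$. If infinitely many $Y_k$ are empty, then any $A=\psi^{-1}[B]$ with $B\in\FF$ does the job trivially, so I may assume that cofinitely many $Y_k$ are nonempty. Next I would greedily extract indices $k_0<k_1<\cdots$ such that $Y_{k_0},Y_{k_1},\ldots$ are pairwise disjoint; this is possible because, having selected $Y_{k_0},\ldots,Y_{k_{n-1}}$, only finitely many $Y_k$ meet their (finite) union, so a suitable $k_n$ exists. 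I would then partition $J$ into finite pieces $\{L_n:n\in\omega\}$ with $Y_{k_n}\subseteq L_n$, by enumerating $J\setminus\bigcup_n Y_{k_n}$ and distributing its elements one per $L_n$. Defining $\phi:J\longrightarrow\omega$ by $\phi(j)=n$ iff $j\in L_n$ then gives a finite-to-one function.

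Applying the contrapositive of Corollary~\ref{charmeagerfilter} to $\FF$ with this $\phi$ yields $B\in\FF$ such that $B\cap L_n=\varnothing$ for infinitely many $n$. For each such $n$, since $Y_{k_n}\subseteq L_n$, we have $B\cap Y_{k_n}=\varnothing$, and hence $\psi^{-1}[B]\cap(\phi')^{-1}(k_n)=\varnothing$. Taking $A:=\psi^{-1}[B]\in\psi^{-1}(\FF)$, the set $[[A=z_I]]$ with respect to $\phi'$ contains every such $k_n$ and is therefore infinite, completing the argument via Talagrand. The only genuinely non-routine ingredient is the translation between finite-to-one partitions on $I$ and on $J$ in a way compatible with $\psi$; the greedy extraction of pairwise disjoint $Y_{k_n}$'s is exactly what accomplishes this.
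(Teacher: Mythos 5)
The paper offers no proof of this lemma (it declares Lemmas~\ref{restrictfilter}--\ref{inversefilter} to be ``simple applications of Corollary~\ref{charmeagerfilter}'' and leaves them to the reader), and your argument is exactly that intended application, carried out correctly: the point-finiteness of the family $Y_k=\psi[(\phi')^{-1}(k)]$, the greedy extraction of a pairwise disjoint subfamily, and the transfer of Talagrand's criterion from $J$ back to $I$ via finite blocks $L_n\supseteq Y_{k_n}$ are all sound. The one blemish is your justification that $\varnothing\notin\psi^{-1}(\FF)$: an element of $\FF$ need not meet $\ran(\psi)$ when $\ran(\psi)$ is coinfinite in $J$, so strictly speaking the lemma needs a tacit hypothesis of this kind (in the paper's application $\psi$ is a restriction of $\phi$ to a full preimage, so the issue does not arise); this imprecision lies in the statement rather than in your non-meagerness argument, which is unaffected.
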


\section{Nowhere meager free sets}

This section contains our main result, which is Theorem \ref{main}. In fact, as Theorem \ref{mainpolish} shows, the assumption that $X$ is crowded can be dropped. Lemma \ref{maintech} is the combinatorial core of this result, and its proof is postponed to the end of the section.

\begin{lemma}\label{maintech}
Let $\RR$ be a collection of meager relations on $2^\omega$ such that $|\RR|\leq\omega$. Then there exist nowhere meager subsets $E_\alpha$ of $2^\omega$ for $\alpha\in\cccc$ such that $x\notin R$ whenever $R\in\RR$ and $x\in\prod_{k\in n_R}E_{\alpha_k}$ for distinct $\alpha_0,\ldots,\alpha_{n_R-1}\in\cccc$.
\end{lemma}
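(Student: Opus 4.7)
The plan is to construct the sets $E_\alpha$ along the $\cccc$-many branches of a tree of non-meager filters. Fix an enumeration $\langle t_k:k\in\omega\rangle$ of $2^{<\omega}$ and put $I_k=\omega\setminus\dom(t_k)$. By recursion on $s\in 2^{<\omega}$, I would build a monotonically decreasing system of non-meager filters $\FF^k_s$ on $I_k$, one for each pair $(s,k)$, so that $\FF^k_t\subseteq\FF^k_s$ whenever $s\subseteq t$. For each branch $\alpha\in 2^\omega$, which via a fixed bijection we regard as an element of $\cccc$, Lemma \ref{productfilter} yields a non-meager filter $\FF^k_\alpha=\bigcap_m \FF^k_{\alpha\upharpoonright m}$ on $I_k$, and I set
\[
E_\alpha=\bigcup_{k\in\omega}\{y\in 2^\omega:y\upharpoonright\dom(t_k)=t_k\text{ and }y\upharpoonright I_k\in\FF^k_\alpha\}.
\]
Each $k$-th summand is (up to the obvious homeomorphism $[t_k]\cong 2^{I_k}$) a non-meager subset of the basic open set $[t_k]$, and since $\{[t_k]:k\in\omega\}$ is a basis for $2^\omega$, it follows that $E_\alpha$ is nowhere meager in $2^\omega$.

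During the recursion I would process, stage by stage, a countable enumeration of ``tasks'' of the form $\tau=(R,s_0,\ldots,s_{n_R-1},k_0,\ldots,k_{n_R-1})$, where $R\in\RR$, the $k_j$ lie in $\omega$, and $s_0,\ldots,s_{n_R-1}\in 2^{<\omega}$ are pairwise incomparable. When $\tau$ is processed I would refine the filters $\FF^{k_j}_{s_j}$ simultaneously, each to a non-meager subfilter, so that whenever $x_j$ belongs to the lift of $\FF^{k_j}_{s_j}$ to $[t_{k_j}]$ the tuple $(x_0,\ldots,x_{n_R-1})$ avoids $R$. Monotonicity transfers this property to every extension, in particular to the filters $\FF^{k_j}_{\alpha_j}$ for $\alpha_j\supseteq s_j$; any refinements made in later stages must also be propagated down the tree, but intersecting finitely many non-meager filters preserves non-meagerness via Lemma \ref{productfilter}. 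Given distinct $\alpha_0,\ldots,\alpha_{n_R-1}\in 2^\omega$ and any $x\in\prod_j E_{\alpha_j}$, fix $\ell$ large enough that the restrictions $\alpha_j\upharpoonright\ell$ are pairwise distinct, read off indices $k_j$ witnessing $x_j\in E_{\alpha_j}$, and conclude from the handling of the corresponding task that $x\notin R$.

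The main obstacle is the refinement step itself, which is an $n$-ary version, allowing variable index sets, of the binary splitting lemma driving \cite[Theorem 1]{banakhzdomskyy}. Concretely, given a meager relation $R\subseteq(2^\omega)^{n_R}$ and non-meager filters $\FF^{k_j}_{s_j}$ on the possibly different sets $I_{k_j}$, one must shrink each $\FF^{k_j}_{s_j}$ to a non-meager subfilter so that the corresponding lifted product avoids $R$. I would apply Theorem \ref{charmeager} to $R$, after identifying $(2^\omega)^{n_R}$ with $2^\omega$ via a fixed bijection, to extract a finite-to-one function $\phi_R$ and a witness $z_R$ showing that $R$ is contained in $\{x:[[x=z_R]]_{\phi_R}\text{ is finite}\}$; Lemma \ref{inversefilter} then translates this witness across the $I_{k_j}$, and Corollary \ref{charmeagerfilter} lets one coordinate the shrinking so that $[[x=z_R]]_{\phi_R}$ remains infinite on every relevant tuple, thereby forcing $x\notin R$.
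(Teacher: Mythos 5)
Your overall architecture---a fusion along $2^{<\omega}$ producing $\cccc$ many branches, with incomparable nodes playing the role that the independent family of functions $\GG\subseteq\Ss^\omega$ plays in the paper's one-step construction---is reasonable, and the bookkeeping you flag (propagating refinements down the tree, countably many tasks per node, Lemma \ref{productfilter} for the branch intersections) could be managed. But the refinement step you call ``the main obstacle'' is not merely technical: it is false as you have set things up. Your sets $E_\alpha$ are unions of lifted \emph{filters}, so on each piece they are upward closed modulo the finite stem $t_k$; in particular every non-trivial piece of every $E_\alpha$ contains the point $y^*$ with $y^*\upharpoonright\dom(t_k)=t_k$ and $y^*\upharpoonright I_k\equiv 1$, because $I_k$ itself belongs to every filter on $I_k$. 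Now fix any finite-to-one $\phi:\omega\longrightarrow\omega$ and let
$$
R=\left\{(x,y)\in(2^\omega)^2:\{m\in\omega:\phi^{-1}(m)\subseteq x\textrm{ and }\phi^{-1}(m)\cap y=\varnothing\}\textrm{ is finite}\right\}.
$$
This is a meager relation (its witness from Theorem \ref{charmeager} is identically $1$ in the first coordinate and identically $0$ in the second), yet $(x,y^*)\in R$ for \emph{every} $x$, since $y^*\supseteq I_k$ meets all but finitely many $\phi^{-1}(m)$. Hence no shrinking to non-meager subfilters can ever make the lifted product avoid this $R$: every subfilter still contains the top element. The $n$-ary splitting lemma you want, in the form ``shrink each $\FF^{k_j}_{s_j}$ to a non-meager subfilter so that the lifted product avoids $R$,'' is therefore unprovable, and the same obstruction applies to your reading of the binary case.

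The paper's proof is built precisely to dodge this. Its building blocks are the sets $\{z:[[z=x_R(k)]]\in\FF_\ell\}$ for the actual witnesses $x_R(k)$; such a set is a filter preimage only after flipping coordinates so that $x_R(k)$ becomes identically $1$, and that flip depends on the pair $(k,R)$. Since each $E_g$ must handle all pairs $(k,R)$ at once, it cannot be a filter, nor a union of lifted filters, in any single coordinatization. Instead, the auxiliary non-meager filters $\FF_\ell$ live on the index set of the partition $\phi$ (so closure under finite intersections gives $\bigcap_{k\in n_R}[[x(k)=x_R(k)]]\in\FF_\ell$, hence infinite), they are concentrated on pairwise disjoint sets $\Omega_\ell$, and the Baireness of $E_g$ comes from Oxtoby's product theorem applied to $E_g\cong\prod_{s\in\Ss}E_g\upharpoonright I_s$, not from any filter structure of $E_g$ itself. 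If you replace ``non-meager filter on $I_k$'' by ``nowhere meager subset of $2^{I_k}$ closed under finite modifications'' and move the filters to the quotient along $\phi$, your fusion can be repaired---but at that point you will essentially have reconstructed the paper's argument.
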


\begin{theorem}\label{main}
Let $\RR$ be a collection of meager relations on a crowded Polish space $X$ such that $|\RR|\leq\omega$. Then there exists $F\subseteq X$ that satisfies the following conditions:
\begin{itemize}
\item $F$ is dense in $X$.
\item $F$ is Baire.
\item $F$ is $\RR$-free.
\end{itemize}
Furthermore, given any cardinal $\kappa$ such that $\cofm\leq\kappa\leq\cccc$, it is possible to choose $F$ so that the additional requirement $|F|=\kappa$ will be satisfied.
\end{theorem}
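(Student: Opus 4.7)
The plan is to derive Theorem \ref{main} from Lemma \ref{maintech} by a ``one representative per $E_\alpha$'' selection, controlled so that the resulting set meets every basic open set off every meager set from a cofinal family.

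First I would reduce to $X=2^\omega$. Because $X$ is a crowded Polish space, it contains a dense copy of $\omega^\omega$; being Polish, this copy is automatically $G_\delta$, so $X$ has a dense $G_\delta$ subspace $Y\cong\omega^\omega$. Similarly, $2^\omega$ contains a dense $G_\delta$ subspace $Y'\cong\omega^\omega$ (e.g.\ the complement of any countable dense set). Fix a homeomorphism $\phi\colon Y'\to Y$. Since meagerness of a subset of a dense $G_\delta$ subspace agrees with meagerness in the ambient Polish space, pulling each $R\in\RR$ back along $\phi$ coordinate-wise produces a meager relation $R'$ on $2^\omega$. Applying Lemma \ref{maintech} to $\RR'=\{R':R\in\RR\}$, intersecting the resulting $E'_\alpha\subseteq 2^\omega$ with $Y'$, and pushing forward via $\phi$, I obtain nowhere meager subsets $E_\alpha\subseteq X$ for $\alpha\in\cccc$ satisfying the lemma's freeness property in $X$. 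I would further arrange the $E_\alpha$'s to be pairwise disjoint, which can be made part of (or a direct consequence of) the proof of Lemma \ref{maintech}.

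For the selection, fix a countable base $\{U_n:n\in\omega\}$ of $X$ and a cofinal family $\{M_\xi:\xi<\cofm\}$ of meager subsets of $X$; enumerate $\omega\times\cofm$ as $\{(n_i,\xi_i):i<\cofm\}$. Pick $A\subseteq\cccc$ with $|A|=\kappa$ together with pairwise distinct $\alpha_i\in A$ for $i<\cofm$ (both possible because $\cofm\leq\kappa\leq\cccc$). For each $i<\cofm$, $E_{\alpha_i}\cap U_{n_i}$ is non-meager in $X$ and hence not contained in $M_{\xi_i}$, so I pick
\[
e_{\alpha_i}\in E_{\alpha_i}\cap U_{n_i}\setminus M_{\xi_i}.
\]
For each remaining $\alpha\in A$, choose any $e_\alpha\in E_\alpha$, and set $F=\{e_\alpha:\alpha\in A\}$. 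Pairwise disjointness of the $E_\alpha$'s makes $\alpha\mapsto e_\alpha$ injective, giving $|F|=\kappa$; moreover, any injection $x\colon n_R\to F$ decomposes as $(e_{\beta_0},\ldots,e_{\beta_{n_R-1}})$ with distinct $\beta_k$, placing $x$ in $\prod_{k<n_R}E_{\beta_k}$ and hence outside $R$ by the freeness from the previous step, so $F$ is $\RR$-free.

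To see $F$ is nowhere meager in $X$ (and therefore dense and Baire, by the proposition from the introduction), given a non-empty open $U\subseteq X$ and a meager $N\subseteq X$, pick $n$ with $U_n\subseteq U$ and $\xi<\cofm$ with $N\subseteq M_\xi$; letting $i$ be the index with $(n_i,\xi_i)=(n,\xi)$, one has $e_{\alpha_i}\in F\cap U\setminus N$. Thus $F\cap U\not\subseteq N$ for every non-empty open $U$ and every meager $N$, which is precisely the nowhere meager condition. The one non-routine point in the argument is establishing pairwise disjointness of the $E_\alpha$'s from Lemma \ref{maintech}; everything else is standard Baire-category bookkeeping.
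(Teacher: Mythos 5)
Your proposal is correct and is essentially the paper's own argument: reduce to $2^\omega$ via a dense copy of $\omega^\omega$, apply Lemma \ref{maintech}, and then select one point from each $E_\alpha$ against a $\cofm$-sized cofinal family of (meager set, basic open set) pairs. The one point you flagged as non-routine --- pairwise disjointness of the $E_\alpha$'s --- is handled in the paper exactly as you guessed, as a direct consequence of the lemma: one simply adds the diagonal $\Delta=\{(z,z):z\in 2^\omega\}$ (a meager binary relation) to $\RR'$ before applying Lemma \ref{maintech}, so that freeness for $\Delta$ forces $E_\alpha\cap E_\beta=\varnothing$ for $\alpha\neq\beta$.
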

\begin{proof}
Since $X$ is a crowded Polish space, it contains a dense subspace $B$ that is homeomorphic to $\omega^\omega$ (see the first paragraph of the proof of \cite[Theorem 5.4]{medinizdomskyy} and \cite[Theorem 1.9.8]{vanmill}). Identify $B$ with the subspace of $2^\omega$ consisting of the sequences that are not eventually constant. Given $R\in\RR$ with $n=n_R$, let $R'=R\cap (B^n)$, and view each $R'$ as a meager relation on $2^\omega$. Let $\RR'=\{R':R\in\RR\}$.

Since $\kappa\geq\cofm$, it is possible to fix $(M_\alpha,U_\alpha)$ for $\alpha\in\kappa$ so that the following conditions will be satisfied:
\begin{itemize}
\item Each $M_\alpha$ is a meager subset of $X$.
\item Each $U_\alpha$ is a non-empty open subset of $X$.
\item Given a meager subset $M$ of $X$ and a non-empty open subset $U$ of $X$, there exists $\alpha\in\kappa$ such that $M\subseteq M_\alpha$ and $U_\alpha\subseteq U$.
\end{itemize}
Assume without loss of generality that $\Delta\in\RR'$, where $\Delta=\{(z,z):z\in 2^\omega\}$. Let $E_\alpha$ for $\alpha\in\cccc$ be obtained by applying Lemma \ref{maintech} with $\RR=\RR'$. Notice that $\Delta\in\RR'$ will ensure that $E_\alpha\cap E_\beta=\varnothing$ whenever $\alpha\neq\beta$. Without loss of generality, assume that each $E_\alpha\subseteq B$, and notice that each $E_\alpha$ is nowhere meager in $X$. Therefore, it is possible to pick
$$
z_\alpha\in (E_\alpha\cap U_\alpha)\setminus M_\alpha
$$
for $\alpha\in\kappa$. Define $F=\{z_\alpha:\alpha\in\kappa\}$. It is easy to check that $F$ has the desired properties.
\end{proof}
\begin{corollary}\label{addcov}
Let $\RR$ be a collection of meager relations on a crowded Polish space $X$ such that $|\RR|<\addm$. Then there exists a nowhere meager $F\subseteq X$ that is $\RR$-free.
\end{corollary}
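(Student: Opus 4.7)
The plan is to reduce the hypothesis $|\RR|<\addm$ to the countable case covered by Theorem \ref{main} by combining the relations in $\RR$ arity-by-arity.

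First, I would partition $\RR$ according to arity: for each $n\geq 1$, set $\RR_n=\{R\in\RR:n_R=n\}$, so each $\RR_n$ consists of meager subsets of the crowded Polish space $X^n$. Since $|\RR_n|\leq|\RR|<\addm$ and (as the remark following the definition of $\addm$ points out) $2^\omega$ may be replaced by any crowded Polish space in the definition of $\addm$, the set
\[
R_n=\bigcup\RR_n
\]
is meager in $X^n$ (taking $R_n=\varnothing$ when $\RR_n=\varnothing$). Thus $R_n$ is a meager $n$-ary relation on $X$.

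Next, I would apply Theorem \ref{main} to the countable collection $\RR'=\{R_n:n\geq 1\}$ of meager relations on $X$, obtaining a dense Baire subspace $F\subseteq X$ that is $\RR'$-free; by the Proposition at the beginning of Section~1, such an $F$ is nowhere meager in $X$. Finally, I would verify that $F$ is $\RR$-free: if $R\in\RR$ and $x:n_R\to F$ is injective, then $R\subseteq R_{n_R}$, so $x\notin R_{n_R}$ by $\RR'$-freeness, hence $x\notin R$.

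There is essentially no obstacle here; the only point that needs to be noted is the justification that $\addm$ governs additivity of the meager ideal on $X^n$ just as on $2^\omega$, which is precisely the content of the parenthetical remark in Section~2 of the paper.
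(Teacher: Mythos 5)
Your proposal is correct and coincides with the paper's own argument: the paper likewise defines $R_n'=\bigcup\{R\in\RR:n_R=n\}$, notes that each is meager because $|\RR|<\addm$, and applies Theorem \ref{main} to the resulting countable collection. Your additional remark about $\addm$ applying to $X^n$ in place of $2^\omega$ is the same justification the paper relies on implicitly via its Section~2 observation.
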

\begin{proof}
Define $R_n'=\bigcup\{R:R\in\RR\text{ and }n_R=n\}$ for $1\leq n<\omega$, and notice that each $R_n'$ is a meager relation on $X$ because $|\RR|<\addm$. Let $\RR'=\{R_n':1\leq n<\omega\}$. Since every $\RR'$-free subset of $X$ is clearly $\RR$-free, an application of Theorem \ref{main} will conclude the proof.
\end{proof}

\begin{proof}[Proof of Lemma \ref{maintech}]
Using Theorem \ref{charmeager}, one can easily construct a single finite-to-one function $\phi:\omega\longrightarrow\omega$ and $x_R\in (2^\omega)^{n_R}$ for $R\in\RR$ so that
$$
R\subseteq\left\{x\in (2^\omega)^{n_R}:\bigcap_{k\in n_R}[[x(k)=x_R(k)]]\textrm{ is finite}\right\}
$$
for every $R\in\RR$. Let $\Ss=\{(k,R):k\in n_R\}$, and assume without loss of generality that $\Ss$ is infinite.

Since $\Ss$ is a countable set, we can apply \cite[Exercise III.2.12]{kunen} and fix a family $\GG\subseteq \Ss^\omega$ such that $|\GG|=\cccc$ and
$$
\bigcap_{k\in n}g_k^{-1}(s_k)\textrm{ is infinite}
$$
whenever $1\leq n<\omega$, $s_0,\ldots, s_{n-1}\in\Ss$, and $g_0,\ldots, g_{n-1}$ are distinct elements of $\GG$. Write $\omega=\bigcup_{\ell\in\omega}\Omega_\ell$, where the sets $\Omega_\ell$ are infinite and pairwise disjoint, and fix a non-meager filter $\FF_\ell$ for $\ell\in\omega$ such that $\Omega_\ell\in\FF_\ell$.

We claim that the sets
$$
E_g=\bigcap_{(k,R)\in\Ss}\left\{z\in 2^\omega:[[z=x_R(k)]]\in\bigcap_{\ell\in g^{-1}(k,R)}\FF_\ell\right\}
$$
for $g\in\GG$ have the desired properties. Fix $R\in\RR$, distinct $g_0,\ldots, g_{n_R-1}\in\GG$ and $x\in\prod_{k\in n_R}E_{g_k}$. By the choice of $\GG$, there exists $\ell\in\omega$ such that $\ell\in g_k^{-1}(k,R)$ for every $k\in n_R$. In particular, $[[x(k)=x_R(k)]]\in\FF_\ell$ for every $k\in n_R$. Therefore $\bigcap_{k\in n_R}[[x(k)=x_R(k)]]\in\FF_\ell$, which implies $x\notin R$.

It remains to show that each $E_g$ is dense in $2^\omega$ and Baire. Fix $g\in\GG$, and let $E=E_g$. Define $J_s=\bigcup_{\ell\in g^{-1}(s)}\Omega_\ell$ and $I_s=\phi^{-1}[J_s]$ for $s\in\Ss$. Since $E$ is closed under finite modifications of its elements, in order to prove that $E$ is dense in $2^\omega$, it will be enough to show that $E\neq\varnothing$. It will be sufficient to construct $z\in 2^\omega$ such that $[[z=x_R(k)]]\supseteq J_{(k,R)}$ for every $(k,R)\in\Ss$. This is easily achieved by setting $z\upharpoonright I_{(k,R)}=x_R(k)\upharpoonright I_{(k,R)}$ for every $(k,R)\in\Ss$.

Finally, we will show that $E$ is Baire. It is easy to check that $z\in E$ if and only if $z\upharpoonright I_s\in E\upharpoonright I_s$ for every $s\in\Ss$. It follows that $E$ is homeomorphic to $\prod_{s\in\Ss}E\upharpoonright I_s$. Since a countable product of Baire spaces is a Baire space (see \cite[Theorem 3]{oxtoby} or \cite[Exercise A.6.11]{vanmill}), it will be enough to show that each $E\upharpoonright I_s$ is Baire.

Fix $s=(k,R)\in\Ss$, and let $J=J_s$, $I=I_s$. Define $\FF=\bigcap_{\ell\in g^{-1}(s)}\FF_\ell$. Notice that $\FF$ is a non-meager filter by Lemma \ref{productfilter}. Also define $\psi=\phi\upharpoonright I:I\longrightarrow J$. By considering an appropriate homeomorphism of $2^I$, we can assume without loss of generality that $x_R(k)(i)=1$ for every $i\in I$. Under this assumption, it is easy to realize that $E\upharpoonright I=\psi^{-1}(\FF\upharpoonright J)$. It follows from Lemmas \ref{restrictfilter} and \ref{inversefilter} that $E\upharpoonright I$ is Baire.
\end{proof}

\section{Dense completely Baire free sets}

The main result of this section is Theorem \ref{maincb}, which shows that Theorem \ref{main} can be considerably strengthened under the assumption of $\MAc$. Once again, this generalizes to arbitrary Polish spaces (see Theorem \ref{maincbpolish}). We will need several preliminary lemmas.

Let $X$ be a set and $1\leq n<\omega$. Given $A\subseteq X^n$ and $x\in X^{n-1}$, we will use the notation
$$
A[x]=\{z\in X:x^{\frown}z\in A\}.
$$
Notice that if $n=1$ and $x=\varnothing$ then $A[x]=A$. The following is a special case of a classical result (see \cite[Theorem 8.41]{kechris} for a proof).
\begin{lemma}[Kuratowski, Ulam]\label{fubini}
Let $X$ be a space and $2\leq n<\omega$. If $A$ is a comeager subset of $X^n$ then there exists a comeager subset $B$ of $X^{n-1}$ such that $A[x]$ is comeager in $X$ for every $x\in B$.
\end{lemma}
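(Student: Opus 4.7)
The plan is to reduce to the case $n=2$ and then run the classical density-of-sections argument, which works because the product $X^{n-1}$ is separable metrizable (so has a countable base), regardless of any Baire assumption on $X$. Writing $X^n = X^{n-1}\times X$, the statement becomes: if $A$ is comeager in $Y\times X$ where $Y=X^{n-1}$, then the section $A[y]$ is comeager in $X$ for comeagerly many $y\in Y$. Since both factors are second countable, the argument below goes through with $Y$ in place of $X$ in the first coordinate.

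The combinatorial core is the following claim: if $C\subseteq Y\times X$ is closed and nowhere dense, then $\{y\in Y:C[y]\text{ is nowhere dense in }X\}$ is comeager in $Y$. To prove it, set $U=(Y\times X)\setminus C$, which is open and dense. Fix a countable base $\{V_m:m\in\omega\}$ of non-empty open subsets of $X$, and let
$$
W_m=\{y\in Y:U[y]\cap V_m\neq\varnothing\}=\mathrm{proj}_Y\bigl(U\cap(Y\times V_m)\bigr).
$$
Each $W_m$ is open (as a projection of an open set). It is also dense: for any non-empty open $V\subseteq Y$, density of $U$ yields a point in $U\cap(V\times V_m)$, whose first coordinate lies in $V\cap W_m$. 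Hence $\bigcap_m W_m$ is a dense $G_\delta$ in $Y$, so comeager, and for every $y$ in it the section $U[y]$ meets every $V_m$; being also open, $U[y]$ is open dense, so $C[y]=X\setminus U[y]$ is closed nowhere dense.

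Now assume $A$ is comeager in $Y\times X$ and fix closed nowhere dense sets $C_k\subseteq Y\times X$ with $(Y\times X)\setminus A\subseteq\bigcup_{k\in\omega}C_k$. By the claim, for each $k$ the set $B_k=\{y\in Y:(C_k)[y]\text{ is nowhere dense in }X\}$ is comeager in $Y$. Set $B=\bigcap_{k\in\omega}B_k$, which is comeager as a countable intersection of comeager sets. For every $y\in B$ one has $X\setminus A[y]\subseteq\bigcup_{k\in\omega}(C_k)[y]$, a countable union of nowhere dense sets, so $A[y]$ is comeager in $X$, as required.

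The only step that requires any thought is verifying that each $W_m$ is dense, and that is immediate from the density of $U$ together with the fact that sets of the form $V\times V_m$ (with $V$ running over non-empty open subsets of $Y$ and $V_m$ over a base of $X$) form a $\pi$-base for $Y\times X$; no appeal to $X$ or $Y$ being Baire is needed, since ``comeager'' is a purely topological notion.
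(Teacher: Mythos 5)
Your proof is correct and follows the standard route: the paper offers no proof of this lemma at all, simply citing \cite[Theorem 8.41]{kechris}, and your density-of-sections argument (reduce to closed nowhere dense sets, then use a countable base of $X$ together with openness of projections and of sections) is essentially the argument given there. The only cosmetic slip is calling $\bigcap_m W_m$ \emph{dense} --- it need not be if $Y=X^{n-1}$ fails to be Baire --- but comeagerness is what you actually use and it follows directly from each $Y\setminus W_m$ being closed nowhere dense, so nothing is affected.
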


Notice that every meager relation is contained in a meager $\Fs$ relation. Given a bijection $\pi:n\longrightarrow n$, define $h_\pi:X^n\longrightarrow X^n$ by setting $h_\pi(x)(k)=x(\pi(k))$ for every $x\in X^n$. We say that $R$ is \emph{symmetric} if $h_\pi[R]=R$ for every bijection $\pi:n_R\longrightarrow n_R$. Using the fact that each $h_\pi$ is a homeomorphism, it is easy to see that every meager relation is contained in a symmetric meager $\Fs$ relation.

Assume that $R$ is a symmetric meager $\Fs$ relation on a space $X$ with $n=n_R$. Using Lemma \ref{fubini}, it is easy to recursively construct subsets $G^\ell_R$ of $X^\ell$ for $1\leq\ell\leq n$ such that the following properties are satisfied:
\begin{itemize}
\item $G^n_R=X^n\setminus R$.
\item Each $G^\ell_R$ is a symmetric dense $\Gd$ subset of $X^\ell$.
\item $G^{\ell+1}_R[x]$ is a dense $\Gd$ subset of $X$ for every $x\in G^\ell_R$.
\end{itemize}

Let $\RR$ be a collection consisting of relations on a set $X$. Given $F\subseteq X$, define the following condition:
\begin{enumerate}
\item[$\circledast(F,\RR)$] If $R\in\RR$, $0\leq\ell<n_R$ and $x:\ell+1\longrightarrow F$ is injective, then $x\in 
G^{\ell+1}_R$.
\end{enumerate}
Notice that $\circledast(F,\RR)$ implies that $F$ is $\RR$-free.

\begin{lemma}\label{denseGdelta}
Let $\RR$ be a collection of symmetric meager $\Fs$ relations on a space $X$. Let $F\subseteq X$ be such that $\circledast(F,\RR)$ holds. Fix $R\in\RR$, $0\leq\ell<n_R$ and an injection $x:\ell\longrightarrow F$. Then $G^{\ell+1}_R[x]$ is a dense $\Gd$ subset of $X$.
\end{lemma}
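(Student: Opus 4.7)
The plan is to observe that this lemma is essentially a direct unpacking of the recursive construction of the $G^\ell_R$'s together with the hypothesis $\circledast(F,\RR)$. I would split into the cases $\ell=0$ and $\ell\geq 1$.

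For the base case $\ell=0$, the function $x$ is the empty sequence $\varnothing\in X^0$, and by the convention stated just before Lemma \ref{fubini}, $G^1_R[\varnothing]=G^1_R$. The second bullet in the recursive construction of the $G^\ell_R$'s then immediately tells us that $G^1_R$ is a dense $\Gd$ subset of $X$.

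For the case $1\leq\ell<n_R$, I would apply $\circledast(F,\RR)$ to the injection $x:\ell\longrightarrow F$, using index $\ell-1$ in the condition (which is admissible because $0\leq\ell-1<n_R$). This yields $x\in G^\ell_R$. Once this membership is in hand, the third bullet in the construction of the $G^\ell_R$'s says exactly that $G^{\ell+1}_R[x]$ is a dense $\Gd$ subset of $X$, which is the desired conclusion.

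I do not expect any serious obstacle here: everything reduces to matching the hypothesis $\circledast(F,\RR)$ with the inductive clause that produced the $G^\ell_R$'s, and the only mild subtlety is keeping the indexing straight (in particular remembering that $\circledast$ is phrased in terms of $\ell+1$ while the lemma is phrased in terms of $\ell$, and handling $\ell=0$ separately since $\circledast$ does not directly cover the empty injection). Symmetry of the $G^\ell_R$'s is not needed for this particular lemma, although it would presumably be important in later arguments that permute the coordinates.
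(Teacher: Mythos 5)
Your proposal is correct and follows exactly the same two-case argument as the paper: for $\ell=0$ use the convention $G^1_R[\varnothing]=G^1_R$ and the second bullet of the construction, and for $\ell\geq 1$ apply $\circledast(F,\RR)$ to $x:(\ell-1)+1\longrightarrow F$ to get $x\in G^\ell_R$ and then invoke the third bullet. No issues.
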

\begin{proof}
If $\ell=0$ then $G^{\ell+1}_R[x]=G^1_R$ is a dense $\Gd$ subset of $X$ by construction. Now assume that $\ell\geq 1$. By applying condition $\circledast(F,\RR)$ to $x:(\ell-1)+1\longrightarrow F$, one sees that $x\in G^\ell_R$. Therefore $G^{\ell+1}_R[x]$ is a dense $\Gd$ subset of $X$ by construction.
\end{proof}

\begin{lemma}\label{ddenseGd}
Let $\GG$ be a non-empty collection of dense $\Gd$ subsets of a crowded Polish space $X$ such that $|\GG|<\dddd$. Assume that $\bigcap\GG$ is dense in $X$. Then $\bigcap\GG$ is non-meager in $X$.
\end{lemma}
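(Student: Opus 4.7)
I plan to argue by contradiction. Assuming $\bigcap\GG$ is meager, I fix closed nowhere dense subsets $F_n\subseteq X$ with $\bigcap\GG\subseteq\bigcup_n F_n$, so that $H:=X\setminus\bigcup_n F_n$ is a dense $\Gd$ in $X$ disjoint from $\bigcap\GG$. Since $\bigcap\GG$ is dense and $X$ is separable I also fix a countable dense subset $D\subseteq\bigcap\GG$; then $D\subseteq G$ for every $G\in\GG$, so each $X\setminus G$ is a meager $\Fs$ subset of $X$ disjoint from $D$. Decomposing $X\setminus G=\bigcup_n C^G_n$ with $C^G_n$ closed nowhere dense in $X$ and disjoint from $D$, and using $H\subseteq\bigcup_{G\in\GG}(X\setminus G)$, I obtain a cover of the dense $\Gd$ set $H$ by $|\GG|$ many closed nowhere dense subsets of $X$, each disjoint from $D$.

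The plan is then to derive a contradiction with $|\GG|<\dddd$ by showing that such a cover needs at least $\dddd$ many sets. The idea is that in a zero-dimensional compact model (for instance $X=2^\omega$ with $D$ the eventually-zero sequences) a closed subset of $X$ disjoint from $D$ is automatically compact, and hence becomes a compact subset of the Polish space $X\setminus D$, which is homeomorphic to $\omega^\omega$. Covering the dense $\Gd$ set $H\subseteq X\setminus D\cong\omega^\omega$ by fewer than $\dddd$ such compacts would then give a compact cover of an $\omega^\omega$-like Polish space of size strictly less than $\dddd$, contradicting the characterization of $\dddd$ as the minimum cardinality of a compact cover of $\omega^\omega$ recorded just after the definition of $\dddd$ in the paper. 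To pass from a general crowded Polish $X$ to this setting I would work inside a dense zero-dimensional subspace of $X$ or pull everything back through a suitable continuous map from $\omega^\omega$, making sure that the dense $\Gd$ structure of $H$ and the countable dense set $D$ both survive the reduction.

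The main obstacle will be this reduction and compact-set identification. For arbitrary crowded Polish $X$ a closed subset disjoint from a countable dense $D$ need not be compact; for instance in $X=\omega^\omega$ itself the set $\{f\in\omega^\omega:f(n)\geq 1\text{ for all }n\}$ is closed, disjoint from the eventually-zero sequences, and not compact. Choosing the right topological model, the right $D$ inside it, and verifying that the restriction of $H$ and of the cover retain the density and disjointness needed for the $\dddd$-lower bound is the critical technical step. Once this identification is in place, the contradiction follows immediately from the definition of $\dddd$.
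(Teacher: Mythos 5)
Your overall strategy matches the paper's: reduce to the case where $X$ is a dense $\Gd$ subspace of $2^\omega$, delete a countable dense subset $D$ of $\bigcap\GG$ so that $B=2^\omega\setminus D$ is homeomorphic to $\omega^\omega$, turn the relevant complements into ($\sigma$-)compact subsets of $B$, and aim for a contradiction with $|\GG|<\dddd$. The compactness problem you flag at the end is resolved exactly as you suspect it must be: one takes complements in $2^\omega$ rather than in $X$, so that $2^\omega\setminus G$ is an $\Fs$ (hence $\sigma$-compact) subset of $2^\omega$ disjoint from $D$, i.e.\ a $\sigma$-compact subset of $B$; your decomposition $X\setminus G=\bigcup_n C^G_n$ with $C^G_n$ closed in $X$ produces sets that need not be compact, since $X$ is only a $\Gd$ in $2^\omega$.

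The genuine gap is the final step. You assert that covering the dense $\Gd$ set $H\subseteq B\cong\omega^\omega$ by fewer than $\dddd$ compact subsets of $B$ contradicts the characterization of $\dddd$ ``immediately''. It does not: $\dddd$ is the least number of compact sets covering \emph{all} of $\omega^\omega$, whereas here only the comeager set $H$ is covered, the covering compacts $K_i$ are not subsets of $H$, and the traces $K_i\cap H$ are in general not compact (if $K_i$ is a crowded compact set meeting $H$ densely, then $K_i\cap H$ is a dense $\Gd$ in $K_i$ and can itself be a copy of $\omega^\omega$); nor does $H\cong\omega^\omega$ help, for the same reason. One needs a further argument to upgrade ``a comeager subset of $B$ is covered by fewer than $\dddd$ compacts'' to ``$B$ is covered by fewer than $\dddd$ compacts''. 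The paper supplies this with a translation trick: fix a topological group structure on $B\cong\ZZZ^\omega$ and set $N=\bigcup_{G\in\GG}(2^\omega\setminus G)$; if $N$ were comeager in $B$, then for every $x\in B$ the comeager sets $x\cdot N$ and $N$ would intersect, so $B=N\cdot N^{-1}$ would be a union of fewer than $\dddd$ compact sets, which is the contradiction. (Alternatively one could invoke the Hurewicz dichotomy to find a closed-in-$B$ copy of $\omega^\omega$ inside $H$, whose intersections with the $K_i$ \emph{are} compact.) Without one of these ingredients your argument is incomplete.
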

\begin{proof}
Since every crowded Polish space has a dense $\Gd$ zero-dimensional subspace (see the first paragraph of the proof of \cite[Theorem 5.4]{medinizdomskyy}), we can assume without loss of generality that $X$ is zero-dimensional. In particular, we can assume that $X$ is a dense (necessarily $\Gd$) subspace of $2^\omega$.

Fix a countable dense subset $D$ of $\bigcap\GG$. Let $B=2^\omega\setminus D$, and observe that $X\setminus D$ is a dense $\Gd$ subset of $B$. Since $B$ is homeomorphic to $\ZZZ^\omega$ (see \cite[Theorem 1.9.8]{vanmill}), we can fix a binary operation $\cdot$ on $B$ that makes $B$ a topological group. Let $N=\bigcup\{2^\omega\setminus G:G\in\GG\}$, and notice that $N$ can be written as the union of less than $\dddd$ compact subsets of $B$. Assume, in order to get a contradiction, that $N$ is comeager in $B$. Since $B$ is homeomorphic to $\omega^\omega$, it will be enough to show that $N\cdot N^{-1}=B$. To see this, fix an arbitrary $x\in B$ and observe that $(x\cdot N)\cap N$ is comeager in $B$, hence non-empty. This means that $x\cdot y=z$ for some $y,z\in N$, hence $x=z\cdot y^{-1}\in N\cdot N^{-1}$. In conclusion, we see that $B\setminus N=\bigcap\GG\setminus D$ is non-meager in $B$. It follows that $\bigcap\GG$ is non-meager in $X$.
\end{proof}

\begin{lemma}\label{dense}
Assume that $\dddd=\cofm$. Let $\RR$ be a collection of meager relations on a crowded Polish space $X$ such that $|\RR|<\covm$. Then there exists a nowhere meager $F\subseteq X$ such that condition $\circledast(F,\RR)$ holds.
\end{lemma}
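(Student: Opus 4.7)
The plan is a transfinite recursion of length $\kappa=\cofm$ in the style of Theorem \ref{main}, building $F=\{z_\alpha:\alpha<\kappa\}$ and maintaining $\circledast(F_\alpha,\RR)$ throughout, where $F_\alpha=\{z_\beta:\beta<\alpha\}$. Without loss of generality I assume each $R\in\RR$ is a symmetric meager $\Fs$ relation (replace each $R$ by a containing relation of this form; this does not affect $|\RR|$), so that the sets $G^\ell_R\subseteq X^\ell$ are available. Then, as in the proof of Theorem \ref{main}, I enumerate pairs $\{(M_\alpha,U_\alpha):\alpha<\kappa\}$ with each $M_\alpha$ meager and each $U_\alpha$ nonempty open, such that every meager subset of $X$ is contained in some $M_\alpha$ and every nonempty basic open subset of $X$ appears as some $U_\alpha$; this uses $\kappa=\cofm$ and the second countability of $X$.

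At stage $\alpha$, with $\circledast(F_\alpha,\RR)$ given inductively, I set
\[
\GG_\alpha=\bigl\{G^{\ell+1}_R[y]:R\in\RR,\;0\le\ell<n_R,\;y:\ell\to F_\alpha\text{ injective}\bigr\}.
\]
By Lemma \ref{denseGdelta}, each element of $\GG_\alpha$ is a dense $\Gd$ subset of $X$, and $|\GG_\alpha|\le|\RR|+|F_\alpha|+\omega<\cofm=\dddd$. I then pick $z_\alpha\in(U_\alpha\setminus M_\alpha)\cap\bigcap\GG_\alpha$; by the symmetry of the $G^\ell_R$ and the inductive hypothesis, this preserves the invariant, yielding $\circledast(F_{\alpha+1},\RR)$. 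Once $z_\alpha$ is chosen at each stage, $F=\bigcup_\alpha F_\alpha$ is nowhere meager in $X$ by the bookkeeping, and $\circledast(F,\RR)$ holds because it is a condition on finite subsets of $F$.

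The existence of $z_\alpha$ at each stage is the crux of the argument, and it follows from the claim that $\bigcap\GG_\alpha$ is nowhere meager in $X$: for then $\bigcap\GG_\alpha\cap U_\alpha$ is non-meager in $U_\alpha$ and cannot be contained in the meager set $M_\alpha$. By Lemma \ref{ddenseGd} applied in each nonempty open $V\subseteq X$ to the family $\{G\cap V:G\in\GG_\alpha\}$ of fewer than $\dddd$ dense $\Gd$ subsets of $V$, this claim reduces to establishing that $\bigcap\GG_\alpha$ is dense in $X$.

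The density verification is the main obstacle, and it is where both hypotheses enter essentially. The hypothesis $|\RR|<\covm$ supplies density one finite $S\subseteq F_\alpha$ at a time: $\bigcap_{R\in\RR}G^{|S|+1}_R[S]$ is the complement in $X$ of a union of fewer than $\covm$ meager sets and hence meets every nonempty open subset of $X$. To combine across the at most $|F_\alpha|^{<\omega}<\dddd$ choices of $S$, the hypothesis $\dddd=\cofm$ is used together with the $\sigma$-compact characterization of meager $\Fs$ sets in $2^\omega$ that drives the proof of Lemma \ref{ddenseGd}: after realizing $X$ as a dense $\Gd$ subspace of $2^\omega$, the union $\bigcup_{G\in\GG_\alpha}(2^\omega\setminus G)$ is exhibited, within each clopen subset of $2^\omega$, as a union of fewer than $\dddd$ compact sets lying in a copy of $\omega^\omega$ (obtained by removing a countable dense set chosen inside the $|\RR|<\covm$--guaranteed dense intersection for $S=\varnothing$), after which the translation argument yields the desired density. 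Making this combination rigorous is the technical heart of the proof.
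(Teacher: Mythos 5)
Your overall architecture (a transfinite recursion of length $\cofm$, bookkeeping with pairs $(M_\alpha,U_\alpha)$, preservation of $\circledast$ via symmetry of the $G^{\ell}_R$, reduction of the choice of $z_\alpha$ to the nowhere meagerness of $\bigcap\GG_\alpha$, and reduction of that to density via Lemma \ref{ddenseGd}) matches the paper's. But the step you yourself flag as ``the technical heart'' --- the density of $\bigcap\GG_\alpha$ --- is exactly where your argument has a genuine gap, and the way you propose to close it is circular. To run the translation argument from the proof of Lemma \ref{ddenseGd} inside $B=2^\omega\setminus D$, you need every set $2^\omega\setminus G$ with $G\in\GG_\alpha$ to be covered by fewer than $\dddd$ compact subsets of $B$, i.e.\ you need $D\subseteq G$ for \emph{every} $G\in\GG_\alpha$, not merely $D\subseteq\bigcap_{R\in\RR}G^1_R$ (the ``$S=\varnothing$'' part). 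A countable dense $D$ contained in $\bigcap\GG_\alpha$ is precisely a witness to the density of $\bigcap\GG_\alpha$, which is what you are trying to prove. Nor can the ``one finite $S$ at a time'' observation be combined using $\dddd=\cofm$ alone: whenever $\covm<\dddd$ (consistent with $\dddd=\cofm$, e.g.\ in the Miller model) there are fewer than $\dddd$ comeager sets, each individually dense, with empty intersection, so individual density of the members of $\GG_\alpha$ gives nothing about their intersection.

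The paper closes this gap with two devices missing from your proposal. First, the members of $\GG$ are taken to be $G^{\ell+1}_R[x]\cup\ran(x)$ rather than $G^{\ell+1}_R[x]$; together with the inductive hypothesis $\circledast(F_\alpha,\RR)$ this guarantees $F_\alpha\subseteq\bigcap\GG$ (without the $\cup\ran(x)$ the inclusion can fail at points of $\ran(x)$ itself). Second, the bookkeeping is arranged so that \emph{all} basic open sets are hit during the first $\omega$ successor stages: the paper indexes successors as $\beta=\alpha+i+1$ with $\alpha$ zero or limit and requires $U_i\cap C_\alpha\cap F_{\alpha+i+1}\neq\varnothing$. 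At those first $\omega$ stages $F$ is still finite, so $|\GG|\leq|\RR|\cdot\omega<\covm$ and the required point exists outright, without any appeal to Lemma \ref{ddenseGd}. Hence $F_\omega$ is already dense in $X$, and at every stage $\alpha\geq\omega$ the dense set $F_\omega\subseteq F_\alpha\subseteq\bigcap\GG$ supplies, for free, the density hypothesis that Lemma \ref{ddenseGd} needs. If you incorporate these two modifications, your recursion goes through and coincides with the paper's proof.
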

\begin{proof}
Without loss of generality, assume that $\RR$ is non-empty and each $R\in\RR$ is a symmetric meager~$\Fs$. Let $\kappa=\dddd=\cofm$. Fix a collection $\{C_\alpha:\alpha\in\kappa\}$ consisting of comeager subsets of $X$ such that for every comeager subset $C$ of $X$ there exists $\alpha\in\kappa$ such that $C_\alpha\subseteq C$. Fix a countable base $\{U_i:i\in\omega\}$ for $X$.

We will construct an increasing sequence $\langle F_\alpha:\alpha\in\kappa\rangle$ of subsets of $X$ by transfinite recursion. In the end, set $F=\bigcup_{\alpha\in\kappa}F_\alpha$. By induction, we will make sure that the following requirements are satisfied:
\begin{enumerate}
\item $|F_\alpha|<\kappa$ for every $\alpha\in\kappa$.
\item\label{nowheremeager} If $\alpha\in\kappa$ is zero or a limit ordinal and $i\in\omega$ then $U_i\cap C_\alpha\cap F_{\alpha+i+1}\neq\varnothing$.
\item\label{football1} Condition $\circledast(F_\alpha,\RR)$ holds for every $\alpha\in\kappa$.
\end{enumerate}
It is straightforward to check that condition $(\ref{nowheremeager})$ will ensure that $F$ is nowhere meager in $X$. On the other hand, condition $(\ref{football1})$ will guarantee that $\circledast(F,\RR)$ holds.

Start by letting $F_0=\varnothing$. Take unions at limit stages. At a successor stage $\beta=\alpha+i+1$, where $\alpha<\kappa$ is zero or a limit ordinal and $i\in\omega$, assume that $F_{\alpha+i}$ is given. Define
$$
\GG=\{G^{\ell+1}_R[x]\cup\ran(x):R\in\RR, 0\leq\ell<n_R, x:\ell\longrightarrow F_{\alpha+i}\textrm{ is injective}\},
$$
and notice that $\GG$ consists of dense $\Gd$ sets by Lemma \ref{denseGdelta}. We claim that it is possible to pick $z\in U_i\cap C_\alpha\cap\bigcap\GG$. If $\alpha=0$, this follows from the fact that $|\GG|<\covm$. Now assume $\alpha>0$. In this case, it is easy to check that $F_\omega\subseteq F_{\alpha+i}\subseteq\bigcap\GG$. Therefore $U_i\cap\bigcap\GG$ is non-meager in $U_i$ by Lemma \ref{ddenseGd}. Put $F_\beta=F_{\alpha+i}\cup\{z\}$. It remains to verify that condition $\circledast(F_\beta,\RR)$ holds.

Fix $R\in\RR$, $0\leq\ell<n_R$ and an injection $x:\ell+1\longrightarrow F_\beta$. We need to show that $x\in G^{\ell+1}_R$. If $z\notin\ran(x)$, this already follows from condition $\circledast(F_{\alpha+i},\RR)$, so assume that $z\in\ran(x)$. Since $G^{\ell+1}_R$ is symmetric by construction, we can assume without loss of generality that $x(\ell)=z$. Let $x'=x\upharpoonright\ell$, and notice that $z\notin\ran(x')$ because $x$ is injective. Therefore $z\in G^{\ell+1}_R[x']$, which implies $x=x'^{\frown}z\in G^{\ell+1}_R$.
\end{proof}

It is easy to realize that the assumption of $\MAc$ in the following theorem can be weakened to $\dddd=\cccc$, provided $|\RR|<\covm$.
However, since it is one of our main results, we preferred to give the following more ``quotable'' formulation. The same remark holds for Theorem \ref{maincbpolish}.

\begin{theorem}\label{maincb}
Assume that $\MAc$ holds. Let $\RR$ be a collection of meager relations on a crowded Polish space $X$ such that $|\RR|<\cccc$. Then there exists $F\subseteq X$ that satisfies the following conditions:
\begin{itemize}
\item $F$ is dense in $X$.
\item $F$ is completely Baire.
\item $F$ is $\RR$-free.
\end{itemize}
\end{theorem}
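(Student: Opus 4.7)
The plan is to strengthen the recursive construction from the proof of Lemma~\ref{dense} by interleaving an additional ``$\QQQ$-killing'' task designed to prevent closed copies of $\QQQ$ inside $F$. Under $\MAc$ we have $\cccc=\covm=\cofm=\dddd$, so all the cardinals entering the original construction equal $\cccc$; without loss of generality every $R\in\RR$ is a symmetric meager $\Fs$ relation and the sets $G^\ell_R\subseteq X^\ell$ have been fixed as before. Beyond the pairs $(M_\alpha,U_\alpha)$ of Lemma~\ref{dense}, enumerate a list $\{D_\alpha:\alpha<\cccc\}$ of all countable crowded subsets of $X$ in which every such subset appears cofinally often.

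Build $\langle F_\alpha:\alpha<\cccc\rangle$ recursively so that $|F_\alpha|<\cccc$ and $\circledast(F_\alpha,\RR)$ hold throughout. At each successor stage, carry out the density step of Lemma~\ref{dense}, and additionally perform the following $\QQQ$-killing step: whenever $D_\alpha\subseteq F_\alpha$, set
$$
\GG_\alpha=\{G^{\ell+1}_R[x]\cup\ran(x):R\in\RR,\ 0\leq\ell<n_R,\ x:\ell\to F_\alpha\text{ injective}\},
$$
select $w_\alpha\in(\cl_X(D_\alpha)\setminus D_\alpha)\cap\bigcap\GG_\alpha$, and place $w_\alpha$ into $F_{\alpha+1}$. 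As in the proof of Lemma~\ref{dense}, the symmetry of $G^{\ell+1}_R$ ensures that $\circledast$ is preserved. Setting $F=\bigcup_{\alpha<\cccc}F_\alpha$, density of $F$ and $\RR$-freeness follow exactly as in Lemma~\ref{dense}. If $D\subseteq F$ is countable crowded, then since $D$ is countable and $\cccc$ has uncountable cofinality there is some $\beta<\cccc$ with $D\subseteq F_\beta$, and the cofinal enumeration then yields $\alpha\geq\beta$ with $D=D_\alpha\subseteq F_\alpha$; the chosen $w_\alpha\in F\cap(\cl_X(D)\setminus D)$ witnesses that $D$ is not closed in $F$. Hence $F$ contains no closed copy of $\QQQ$, and the implication $(\ref{noQ})\to(\ref{cb})$ of Theorem~\ref{hurewicz} yields that $F$ is completely Baire.

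The main obstacle is the feasibility of the $\QQQ$-killing step: $\cl_X(D_\alpha)$ is a perfect Polish subspace of $X$ that may well be nowhere dense in $X$, so the dense $\Gd$s in $\GG_\alpha$ could a priori be meager in $\cl_X(D_\alpha)$. The observation that resolves this is that $\circledast(F_\alpha,\RR)$ forces $D_\alpha\subseteq\bigcap\GG_\alpha$: for any $G=G^{\ell+1}_R[x]\cup\ran(x)\in\GG_\alpha$ and $d\in D_\alpha\setminus\ran(x)$, the map $x^\frown d$ is an injection $\ell+1\to F_\alpha$, hence $x^\frown d\in G^{\ell+1}_R$ by $\circledast$, giving $d\in G^{\ell+1}_R[x]\subseteq G$. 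Consequently each $G\cap\cl_X(D_\alpha)$ is a $\Gd$ subset of $\cl_X(D_\alpha)$ containing the dense set $D_\alpha$, and is therefore dense $\Gd$ in the crowded Polish space $\cl_X(D_\alpha)$. Since $|\GG_\alpha|<\cccc=\dddd$, Lemma~\ref{ddenseGd} applied inside $\cl_X(D_\alpha)$ yields that $\bigcap\GG_\alpha\cap\cl_X(D_\alpha)$ is non-meager there; subtracting the countable (hence meager) set $D_\alpha$ leaves a non-empty set from which $w_\alpha$ is drawn.
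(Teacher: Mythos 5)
Your argument is essentially the paper's own proof: the ``$\QQQ$-killing'' step, including the key observation that $\circledast(F_\alpha,\RR)$ forces $D_\alpha\subseteq\bigcap\GG_\alpha$ so that the traces on $\cl(D_\alpha)$ are dense $\Gd$ and Lemma \ref{ddenseGd} applies inside $\cl(D_\alpha)$, is exactly the paper's Lemma \ref{key}, and the enumeration of copies of $\QQQ$ cofinally often plus Hurewicz is how the paper finishes (the paper merely takes $F_0$ to be a countable dense subset of the set given by Lemma \ref{dense} rather than interleaving the density task). The one point to tidy up is that when a successor stage adds both a density witness $z$ and a $\QQQ$-killing witness $w_\alpha$, the two additions should be performed sequentially (recomputing $\GG$ after adjoining $z$), since your verification of $\circledast$ only handles injections whose range contains a single new point.
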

\begin{proof}
Without loss of generality, assume that each $R\in\RR$ is a symmetric meager~$\Fs$. Enumerate as $\{Q_\alpha:\alpha\in\cccc\}$ all copies of $\QQQ$ in $X$, making sure to list each one cofinally often. We will construct an increasing sequence $\langle F_\alpha:\alpha\in\cccc\rangle$ of subsets of $X$ by transfinite recursion. In the end, set
$F=\bigcup_{\alpha\in\cccc}F_\alpha$. By induction, we will make sure that the following requirements are satisfied:
\begin{enumerate}
\item\label{small} $|F_\alpha|<\cccc$ for every $\alpha\in\cccc$.
\item\label{killQ} If $Q_\alpha\subseteq F_\alpha$ for some $\alpha\in\cccc$, then $F_{\alpha+1}\cap(\cl(Q_\alpha)\setminus Q_\alpha)\neq\varnothing$.
\item\label{football2} Condition $\circledast(F_\alpha,\RR)$ holds for every $\alpha\in\cccc$.
\end{enumerate}
Using Theorem \ref{hurewicz}, it is straightforward to check that condition $(\ref{killQ})$ will ensure that $F$ is completely Baire. On the other hand, condition $(\ref{football2})$ will guarantee that condition $\circledast(F,\RR)$ holds.

Start by letting $F_0$ be a countable dense subset of the $\RR$-free set given by Lemma~\ref{dense}, thus ensuring that $F$ will be dense in $X$. Take unions at limit stages. At a successor stage $\beta=\alpha+1$, assume that $F_\alpha$ is given. First assume that $Q_\alpha\nsubseteq F_\alpha$. In this case, simply set $F_\beta=F_\alpha$. Now assume that $Q_\alpha\subseteq F_\alpha$. Apply Lemma \ref{key} with $F=F_\alpha$ and $Q=Q_\alpha$ to get $z\in\cl(Q_\alpha)\setminus Q_\alpha$ such that condition $\circledast(F_\alpha\cup\{z\},\RR)$ is satisfied. Finally, set $F_\beta=F_\alpha\cup\{z\}$.
\end{proof}

\begin{lemma}\label{key}
Let $\RR$ be a collection of symmetric meager $\Fs$ relations on a crowded Polish space $X$ such that $|\RR|<\dddd$. Assume that $F$ and $Q$ satisfy the following requirements:
\begin{itemize}
\item $|F|<\dddd$.
\item $Q\subseteq F$ is countable and crowded.
\item Condition $\circledast(F,\RR)$ holds.
\end{itemize}
Then there exists $z\in\cl(Q)\setminus Q$ such that condition $\circledast(F\cup\{z\},\RR)$ holds.
\end{lemma}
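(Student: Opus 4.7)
The plan is to first isolate what $\circledast(F\cup\{z\},\RR)$ requires on top of $\circledast(F,\RR)$. An injection $x:\ell+1\to F\cup\{z\}$ either avoids $z$---in which case $\circledast(F,\RR)$ already gives $x\in G^{\ell+1}_R$---or contains $z$; in the latter case, using that $G^{\ell+1}_R$ is symmetric, one may apply a permutation of coordinates and assume $x(\ell)=z$, and writing $x'=x\upharpoonright\ell$, the outstanding requirement is that $z\in G^{\ell+1}_R[x']$ whenever $x':\ell\to F$ is injective and $z\notin\ran(x')$. Since this requirement is vacuous when $z\in\ran(x')$, it suffices to produce $z\in\cl(Q)\setminus Q$ lying in
$$
H_{R,\ell,x'}:=G^{\ell+1}_R[x']\cup\ran(x')
$$
for every $R\in\RR$, $0\leq\ell<n_R$ and injection $x':\ell\to F$.

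Next I verify that each $H_{R,\ell,x'}$ is a dense $\Gd$ subset of $X$ that contains $Q$. Density follows from Lemma \ref{denseGdelta}, and the $\Gd$ property from
$$
X\setminus H_{R,\ell,x'}=(X\setminus G^{\ell+1}_R[x'])\cap(X\setminus\ran(x')),
$$
which exhibits the complement as the intersection of an $\Fs$ set with an open set, hence $\Fs$. For the inclusion $Q\subseteq H_{R,\ell,x'}$: if $q\in Q\cap\ran(x')$ this is trivial, and if $q\in Q\setminus\ran(x')$ then $x'^{\frown}q$ is an injection $\ell+1\to F$ (because $Q\subseteq F$), so $\circledast(F,\RR)$ yields $x'^{\frown}q\in G^{\ell+1}_R$, i.e.\ $q\in G^{\ell+1}_R[x']$.

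Now I pass to the closed subspace $Y=\cl(Q)$, which is a crowded Polish space (crowdedness being inherited from the density of the crowded set $Q$ in $Y$). The collection
$$
\GG=\{H_{R,\ell,x'}\cap Y:R\in\RR,\ 0\leq\ell<n_R,\ x':\ell\to F\text{ injective}\}
$$
consists of $\Gd$ subsets of $Y$, each of which is dense in $Y$ because it contains $Q$. By infinite cardinal arithmetic and the fact that $|F|\geq\aleph_0$, one has $|\GG|\leq|\RR|\cdot|F|<\dddd$. Since $Q\subseteq\bigcap\GG$, the intersection is dense in $Y$, so Lemma \ref{ddenseGd} applied to $Y$ gives that $\bigcap\GG$ is non-meager in $Y$. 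As $Q$ is countable and $Y$ is a crowded Polish space, $Q$ is meager in $Y$, hence $(\bigcap\GG)\setminus Q\neq\varnothing$. Any $z$ in this set meets every $H_{R,\ell,x'}$, and by the first paragraph this is exactly what is needed to guarantee $\circledast(F\cup\{z\},\RR)$.

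The only mildly delicate point is ensuring that $H_{R,\ell,x'}$ is genuinely $\Gd$ (and not merely comeager), so that Lemma \ref{ddenseGd} applies without modification; this is settled by the complement calculation above. The remainder of the argument is essentially bookkeeping, organized around restricting attention to the crowded Polish subspace $\cl(Q)$ and exploiting the density of $Q$ in $Y$ to pack all of the required dense $\Gd$ sets into a single application of Lemma \ref{ddenseGd}.
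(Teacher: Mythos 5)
Your proposal is correct and follows essentially the same route as the paper's proof: both define the family of sets $(G^{\ell+1}_R[x']\cup\ran(x'))\cap\cl(Q)$, use Lemma \ref{denseGdelta} together with $\circledast(F,\RR)$ to see that they are dense $\Gd$ subsets of the crowded Polish space $\cl(Q)$ containing $Q$, apply Lemma \ref{ddenseGd} to pick $z\in\bigcap\GG\setminus Q$, and then use the symmetry of $G^{\ell+1}_R$ to verify $\circledast(F\cup\{z\},\RR)$. Your extra checks (that adding the finite set $\ran(x')$ preserves the $\Gd$ property, and that $Q$ is meager in $\cl(Q)$ so that $z$ can be chosen outside $Q$) are details the paper leaves implicit, and are handled correctly.
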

\begin{proof}
Without loss of generality, assume that $\RR$ is non-empty. Define
$$
\GG=\{(G^{\ell+1}_R[x]\cup\ran(x))\cap\cl(Q):R\in\RR, 0\leq\ell<n_R,x:\ell\longrightarrow F\textrm{ is injective}\},
$$
and observe that $|\DD|<\dddd$. By Lemma \ref{denseGdelta}, the collection $\GG$ consists of $\Gd$ subsets of the crowded Polish space $\cl(Q)$. Furthermore, it is easy to check that $Q\subseteq F\cap\cl(Q)\subseteq\bigcap\GG$. Therefore $\bigcap\GG$ is non-meager in $\cl(Q)$ by Lemma \ref{ddenseGd}, and it is possible to pick $z\in\bigcap\GG\setminus Q$.

It remains to verify that condition $\circledast(F\cup\{z\},\RR)$ holds. Fix $R\in\RR$, $0\leq\ell<n_R$ and an injection $x:\ell+1\longrightarrow F\cup\{z\}$. We need to show that $x\in G^{\ell+1}_R$. If $z\notin\ran(x)$, this already follows from condition $\circledast(F,\RR)$, so assume that $z\in\ran(x)$. Since $G^{\ell+1}_R$ is symmetric by construction, we can assume without loss of generality that $x(\ell)=z$. Let $x'=x\upharpoonright\ell$, and notice that $z\notin\ran(x')$ because $x$ is injective. Therefore $z\in G^{\ell+1}_R[x']$, which implies $x=x'^{\frown}z\in G^{\ell+1}_R$.
\end{proof}

\section{Applications to independent families}

Given a certain kind of combinatorial object on $\omega$ (such as a filter, an almost disjoint family, or an independent family),
it is natural to ask how ``big'' such an object can be (in the sense of cardinality, Baire category, or Lebesgue measure).
In keeping with the rest of the paper, by ``big'' we will mean ``big in the sense of Baire category''.
For example, it is easy to see that every maximal filter (that is, ultrafilter) is non-meager, and the existence of such filters is guaranteed by Zorn's Lemma. On the other hand, every almost disjoint family must be meager. Furthermore, it is an easy exercise to show that no filter (hence no independent family) can be comeager.

However, to the best of our knowledge, the existence of non-meager independent families in $\ZFC$ was an open problem. In analogy with the case of filters, one might wonder whether every maximal independent family is non-meager. The following proposition shows that this is not the case.

\begin{proposition}[Milovich]\label{dave}
There exists a meager maximal independent family.
\end{proposition}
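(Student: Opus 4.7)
My plan is to realize a maximal independent family inside a meager ``Talagrand shell'' of $2^\omega$, thereby automatically making it meager. By Theorem~\ref{charmeager}, every meager subset of $2^\omega$ is contained in a set of the form $M=\{x\in 2^\omega:[[x=z]]\text{ is finite}\}$ for some finite-to-one $\phi:\omega\to\omega$ and some $z\in 2^\omega$; here $[[x=z]]$ is computed with respect to $\phi$. I will fix such $\phi,z$ with associated partition $\omega=\bigsqcup_n I_n$ (where $I_n=\phi^{-1}(n)$ and $|I_n|\to\infty$) and choose $z$ so that $z\upharpoonright I_n$ is neither constant $0$ nor constant $1$ for any $n$, which makes $M$ closed under complementation. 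The goal becomes: produce a maximal independent family $\Aa\subseteq M$.

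The construction is by transfinite recursion of length $\cccc$. Enumerate $\PP(\omega)=\{B_\alpha:\alpha<\cccc\}$ and build an increasing chain of independent families $\Aa_\alpha\subseteq M$ with $|\Aa_\alpha|<\cccc$, starting from a countable independent family $\Aa_0\subseteq M$ (obtainable by a direct Fichtenholz--Kantorovich-style construction coded so that each generator meets every $I_n$). At stage $\alpha$: if $\Aa_\alpha\cup\{B_\alpha\}$ is not independent, do nothing; if $B_\alpha\in M$ and $\Aa_\alpha\cup\{B_\alpha\}$ is independent, add $B_\alpha$; otherwise $B_\alpha\notin M$ is independent over $\Aa_\alpha$, and we must add a \emph{killer} $C_\alpha\in M$ for which $\Aa_\alpha\cup\{C_\alpha\}$ remains independent while $\Aa_\alpha\cup\{B_\alpha,C_\alpha\}$ does not. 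The final $\Aa=\bigcup_{\alpha<\cccc}\Aa_\alpha\subseteq M$ is then maximal: every $B\in\PP(\omega)\setminus\Aa$ is witnessed at some stage either by being explicitly killed or by $\Aa_\alpha\cup\{B_\alpha\}$ already failing independence.

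The main obstacle is the construction of the killer $C_\alpha$. A first attempt is to take $C_\alpha$ almost disjoint from $B_\alpha$, so that $B_\alpha\cap C_\alpha$ is finite; but this fails when both $B_\alpha$ and $\omega\setminus B_\alpha$ lie outside $M$ (for instance, when $B_\alpha$ contains some $I_n$'s and misses others infinitely often). The remedy is to kill via a more general Boolean combination: pick $A^*\in\Aa_\alpha$ and an $\epsilon\in 2$ so that the infinite set $Y=A^*\cap B_\alpha^\epsilon$ has $\omega\setminus Y$ ``$M$-rich'' (that is, $\omega\setminus Y$ meets $I_n$ for cofinitely many $n$), and aim for $C_\alpha\subseteq^*\omega\setminus Y$ with $C_\alpha\in M$ and $\Aa_\alpha\cup\{C_\alpha\}$ independent. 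Such a choice must split the fewer-than-$\cccc$ Boolean combinations $\bigcap_{A\in F}A^{\nu(A)}$ from $\Aa_\alpha$; this reduces to a density argument carried out in ZFC by drawing $C_\alpha$ from an auxiliary Fichtenholz--Kantorovich independent family of size $\cccc$ pre-built inside $M\cap\PP(\omega\setminus Y)$, where Boolean combinations of the new family yield abundant candidates satisfying all required splittings. The hardest point is to verify that a suitable $Y$, and hence a suitable $A^*$ and $\epsilon$, always exists — in particular that one can never be ``blocked'' from killing $B_\alpha$ at stage $\alpha$ — and this requires a careful analysis of how $B_\alpha\notin M$ interacts with the Talagrand shell.
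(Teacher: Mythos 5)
Your overall strategy --- building a maximal independent family inside a fixed meager ``Talagrand shell'' $M$ by a length-$\cccc$ recursion with killers --- is a genuinely different and far more ambitious route than the paper's, and it has a real gap exactly where you flag one: the killer step. When $B_\alpha\notin M$ is independent over $\Aa_\alpha$, you must produce a single $C_\alpha\in M$ that is almost disjoint from $Y=A^\ast\cap B_\alpha^\epsilon$ and simultaneously keeps $\Aa_\alpha\cup\{C_\alpha\}$ independent; the latter means $w\cap C_\alpha$ and $w\setminus C_\alpha$ are infinite for each of the fewer-than-$\cccc$ Boolean combinations $w$ from $\Aa_\alpha$, and for $w\subseteq A^\ast$ the almost-disjointness from $Y$ forces $C_\alpha$ to meet $w\cap B_\alpha^{1-\epsilon}$ infinitely. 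This is a reaping/splitting-type demand on a family whose size can be any cardinal below $\cccc$, and it is not available in $\ZFC$ without additional inductive structure: it is consistent that there is a family of $\omega_1<\cccc$ infinite sets that no single set splits. Your proposed remedy --- drawing $C_\alpha$ from a pre-built independent family of size $\cccc$ inside $M\cap\PP(\omega\setminus Y)$ --- does not repair this, because an independent family of size $\cccc$ can entirely fail to split a prescribed infinite set (for instance, all of its members can almost contain it), so the claim that Boolean combinations of the auxiliary family ``yield abundant candidates satisfying all required splittings'' is unsubstantiated. Under $\CH$ the recursion does go through (every stage is countable and $C_\alpha$ can be assembled block-by-block on the blocks $I_n$), but the proposition is asserted in $\ZFC$.

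The paper avoids all of this with a short transfer argument: fix an infinite coinfinite $I\subseteq\omega$, take any maximal independent family $\Aa$ on $I$, and replace each $x\in\Aa$ by $x^\ast=x\cup(\omega\setminus I)$. The resulting family lies in the closed nowhere dense set $\{z\subseteq\omega:\omega\setminus I\subseteq z\}$, hence is meager, and a direct computation (translating Boolean combinations over $\omega$ into Boolean combinations over $I$) shows that maximality is preserved. No recursion, no killers, and no hypotheses beyond $\ZFC$ are needed. I recommend either adopting this transfer trick, or isolating your shell construction as a separate and genuinely harder question (does every Talagrand shell contain a maximal independent family in $\ZFC$?) rather than presenting it as a proof of the proposition.
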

\begin{proof}
Fix an infinite coinfinite $I\subseteq\omega$. Let $\Aa$ be a maximal independent family on $I$. Define $x^\ast=x\cup (\omega\setminus I)$ for $x\subseteq I$, then let $\Aa^\ast=\{x^\ast:x\in\Aa\}$. Since $\{z\subseteq\omega:(\omega\setminus I)\subseteq z\}$ is closed nowhere dense in $2^\omega$, it is clear that $\Aa^\ast$ is nowhere dense, hence meager. Furthermore, it is easy to check that $\Aa^\ast$ is an independent family. Before continuing the proof, we clarify one bit of notation. Given $x\subseteq I$, we let $x^0=I\setminus x$ and $x^1=x$. On the other hand, given $x\subseteq I$, we let $(x^\ast)^0=\omega\setminus (x^\ast)$ and $(x^\ast)^1=x^\ast$.

It remains to show that $\Aa^\ast$ is maximal. Fix $z\subseteq\omega$ such that $z\notin\Aa^\ast$. We need to show that $\Aa^\ast\cup\{z\}$ is not an independent family. Let $z'=z\cap I$. First assume that $z'\in\Aa$. Then $(z')^\ast\in\Aa^\ast$. The fact that $z\cap (\omega\setminus (z')^\ast)=\varnothing$ concludes the proof in this case. Now assume that $z'\notin\Aa$. Since $\Aa$ is maximal, by adding an element to $F$ if necessary, we can fix $F\in[\Aa]^{<\omega}$, $\delta\in 2$ and $\nu:F\longrightarrow 2$ satisfying the following conditions:
\begin{itemize}
\item There exists $x\in F$ such that $\nu(x)=0$.
\item $w\cap ((z')^\delta)$ is finite, where $w=\bigcap\{x^{\nu(x)}:x\in F\}$.
\end{itemize}
Let $w'=\bigcap\{(x^\ast)^{\nu(x)}:x\in F\}$, and observe that $w'\subseteq I$ by the first condition. It is clear that $(x^\ast)^\varepsilon\cap I=x^\varepsilon$ for every $x\subseteq I$ and $\varepsilon\in 2$. Hence $w'=w'\cap I=w$. Furthermore, one readily sees that $z^\delta\cap I=(z')^\delta$, where $z^0=\omega\setminus z$ and $z^1=z$. It follows that $w'\cap (z^\delta)=w\cap ((z')^\delta)$, which concludes the proof.
\end{proof}

A straightforward application of Theorem \ref{main} shows that big independent families do exist in $\ZFC$. In fact, the following result has been the main motivation for the research contained in this article.

\begin{theorem}\label{nonmeagerindependent}
There exists a non-meager independent family.
\end{theorem}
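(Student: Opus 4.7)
The plan is to apply Theorem~\ref{main} to $X=2^\omega$, taking $\RR$ to be a countable collection of meager relations arranged so that any $\RR$-free set is automatically an independent family. Specifically, for each $n\geq 1$ and each $\nu\in 2^n$, define the $n$-ary relation
$$
R_{n,\nu}=\left\{(x_0,\ldots,x_{n-1})\in (2^\omega)^n:\bigcap_{k<n}x_k^{\nu(k)}\text{ is finite}\right\}
$$
on $2^\omega$, and set $\RR=\{R_{n,\nu}:n\geq 1,\,\nu\in 2^n\}$.

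The first step is to verify that each $R_{n,\nu}$ is meager in $(2^\omega)^n$. Its complement can be rewritten as
$$
\bigcap_{m\in\omega}\{(x_0,\ldots,x_{n-1})\in(2^\omega)^n:\exists j\geq m\text{ such that }x_k(j)=\nu(k)\text{ for every }k<n\},
$$
and each set in this intersection is open and dense, since any basic open set in $(2^\omega)^n$ fixes only finitely many coordinates, so one may choose some $j\geq m$ outside this finite set and impose $x_k(j)=\nu(k)$ for all $k<n$. Hence $\RR$ is a countable collection of meager relations on $2^\omega$, and Theorem~\ref{main} yields a nowhere meager (hence non-meager) subset $\Aa\subseteq 2^\omega$ that is $\RR$-free.

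To conclude, I would verify that $\Aa$ is an independent family. Given any non-empty $F\in[\Aa]^{<\omega}$ with $|F|=n$ and any $\nu:F\longrightarrow 2$, enumerate $F=\{x_0,\ldots,x_{n-1}\}$ (this is an injection $n\longrightarrow\Aa$) and set $\nu'(k)=\nu(x_k)$; then $\Aa$ being $R_{n,\nu'}$-free gives that $\bigcap_{x\in F}x^{\nu(x)}=\bigcap_{k<n}x_k^{\nu'(k)}$ is infinite. The $n=1$ instances $R_{1,(0)}$ and $R_{1,(1)}$ in particular guarantee that no element of $\Aa$ is finite or cofinite, so the defining condition of an independent family is satisfied for all non-empty finite subsets of $\Aa$. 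All the substantive work has already been absorbed into Theorem~\ref{main}, so no real obstacle arises beyond correctly packaging the failure of independence as a countable union of meager relations.
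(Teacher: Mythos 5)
Your proposal is correct and follows essentially the same route as the paper: encode the failure of independence as the countable family of meager relations $R_{n,\nu}$ and apply Theorem~\ref{main} to get a nowhere meager $\RR$-free set, which is then an independent family. You merely spell out the density argument for the meagerness of each $R_{n,\nu}$ and the translation between $\RR$-freeness and independence, which the paper leaves as easy checks.
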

\begin{proof}
Given $1\leq n<\omega$ and $\nu:n\longrightarrow 2$, define
$$
R_\nu=\left\{x\in (2^\omega)^n:\bigcap_{k\in n}x(k)^{\nu(k)}\textrm{ is finite}\right\}.
$$
Let $\RR=\{R_\nu:1\leq n<\omega, \nu:n\longrightarrow 2\}$, and observe that an $\RR$-free subset of $2^\omega$ is simply an independent family. Furthermore, it is easy to check that each $R_\nu$ is a meager relation. An application of Theorem \ref{main} concludes the proof.
\end{proof}

Similarly, it is clear that the following result can be deduced from Theorem \ref{maincb} and the remark that precedes it. Theorem \ref{cbindependent} slightly improves \cite[Theorem 26]{kunenmedinizdomskyy}, where ``$\dddd=\cccc$'' is substituted by ``$\MAc$''.
\begin{theorem}\label{cbindependent}
Assume $\dddd=\cccc$. Then there exists an independent family that is dense in $2^\omega$ and completely Baire.
\end{theorem}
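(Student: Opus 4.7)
The plan is to derive this as a direct corollary of the strengthened version of Theorem \ref{maincb} mentioned in the remark immediately preceding it, in exactly the same way that Theorem \ref{nonmeagerindependent} is deduced from Theorem \ref{main}.

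First I would introduce the collection of relations that encodes independence. For each $1\leq n<\omega$ and each $\nu:n\longrightarrow 2$, set
$$
R_\nu=\left\{x\in(2^\omega)^n:\bigcap_{k\in n}x(k)^{\nu(k)}\textrm{ is finite}\right\},
$$
and let $\RR=\{R_\nu:1\leq n<\omega,\ \nu:n\longrightarrow 2\}$. As already observed in the proof of Theorem \ref{nonmeagerindependent}, a subset of $2^\omega$ is $\RR$-free if and only if it is an independent family, and each $R_\nu$ is a meager relation on $2^\omega$. The collection $\RR$ is clearly countable.

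Next I would verify the hypotheses of the strengthened Theorem \ref{maincb}: the ambient space $X=2^\omega$ is a crowded Polish space, the assumption $\dddd=\cccc$ is given, and $|\RR|=\omega<\aleph_1\leq\covm$, so in particular $|\RR|<\covm$ holds in $\ZFC$. Applying the strengthened theorem produces a subset $F\subseteq 2^\omega$ that is dense, completely Baire, and $\RR$-free. Such an $F$ is, by the first paragraph, precisely a dense, completely Baire independent family.

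There is essentially no obstacle to overcome here, since all of the combinatorial work has already been carried out in Lemmas \ref{dense} and \ref{key} and in the proof of Theorem \ref{maincb}; the only point worth flagging is the verification that $\dddd=\cccc$ is enough to rerun that proof with this particular $\RR$. This is immediate because $\dddd\leq\cofm\leq\cccc$ gives $\dddd=\cofm$ under the hypothesis, which is exactly what Lemma \ref{dense} requires, while Lemma \ref{key} needs $|\RR|<\dddd$, and this too is automatic since $|\RR|=\omega$.
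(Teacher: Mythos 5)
Your proposal is correct and matches the paper's intended argument exactly: the paper itself deduces Theorem \ref{cbindependent} from Theorem \ref{maincb} together with the preceding remark (weakening $\MAc$ to $\dddd=\cccc$ when $|\RR|<\covm$), using the same countable collection $\{R_\nu\}$ from the proof of Theorem \ref{nonmeagerindependent}. Your verification that $\dddd=\cccc$ forces $\dddd=\cofm$ and that $|\RR|=\omega<\covm$ supplies precisely the details the paper leaves as "clear."
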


\section{Arbitrary Polish spaces}

In this section, we will show that the results of Sections 4 and 5 generalize to arbitrary (that is, not necessarily crowded) Polish spaces. We will use a straightforward adaptation of the method used in \cite{banakhzdomskyy}.

\begin{theorem}\label{mainpolish}
Let $\RR$ be a collection of meager relations on a Polish space $X$. Assume that $|\RR|\leq\omega$. Then there exists $F\subseteq X$ that satisfies the following conditions:
\begin{itemize}
\item $F$ is dense in $X$.
\item $F$ is Baire.
\item $F$ is $\RR$-free.
\end{itemize}
Furthermore, if $X$ is uncountable and $\kappa$ is a cardinal such that $\cofm\leq\kappa\leq\cccc$, then it is possible to choose $F$ so that the additional requirement $|F|=\kappa$ will be satisfied.
\end{theorem}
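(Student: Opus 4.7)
The plan is to split $X$ into a crowded open piece and a countable piece of isolated points, reduce the relations accordingly, and apply Theorem \ref{main} to the crowded piece. Let $D$ be the set of isolated points of $X$ and put $Y=X\setminus\cl(D)$. Then $Y$ is open in $X$ (hence Polish), and any isolated point of $Y$ would also be isolated in $X$ and thus belong to $D\subseteq X\setminus Y$; so $Y$ is either empty or crowded. Also $D$ is dense in $\cl(D)=X\setminus Y$, so any $F\subseteq Y\cup D$ containing $D$ and meeting $Y$ densely will be dense in $X$.

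To absorb the $D$-interactions of $\RR$, for each $R\in\RR$, each $A\subsetneq n_R$, and each injective $y:A\to D$, set
$$
R_y=\{w\in Y^{n_R\setminus A}:y\cup w\in R\},
$$
where $y\cup w\in X^{n_R}$ is the tuple whose $A$-coordinates come from $y$ and whose other coordinates come from $w$. Since the coordinates of $y$ are isolated in $X$, the fiber $\{y\}\times X^{n_R\setminus A}$ is open in $X^{n_R}$; hence $R\cap(\{y\}\times X^{n_R\setminus A})$ is meager there, and $R_y$ is meager in the open subspace $Y^{n_R\setminus A}\subseteq X^{n_R\setminus A}$. Gather all these (including the case $A=\varnothing$, which gives $R\cap Y^{n_R}$) into a countable family $\RR^*$ of meager relations on $Y$.

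If $Y=\varnothing$, then $D$ is dense in $X$ and $F=D$ works: it is dense, Baire (discrete), and $\RR$-free because every injective tuple in $D^{n_R}$ is an isolated point of $X^{n_R}$ and therefore not in any meager set. If $Y\neq\varnothing$, apply Theorem \ref{main} to the crowded Polish space $Y$ with the countable family $\RR^*$ to obtain $F_Y\subseteq Y$ dense in $Y$, Baire, and $\RR^*$-free, prescribing $|F_Y|=\kappa$ when the furthermore clause is in force, and set $F=F_Y\cup D$. Density is immediate. Both $F_Y=F\cap Y$ and $D=F\setminus F_Y$ are open in $F$ (the former because $Y$ is open in $X$, the latter because each $\{d\}$ with $d\in D$ is open in $X$), giving the clopen decomposition $F=F_Y\sqcup D$ into Baire pieces; hence $F$ is Baire. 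For $\RR$-freeness, given $R\in\RR$ and an injective $x:n_R\to F$, let $A$ record the coordinates landing in $D$: if $A=n_R$ then $x$ is isolated in $X^{n_R}$ and $x\notin R$; otherwise $x=y\cup w$ with $w\in F_Y^{n_R\setminus A}\subseteq Y^{n_R\setminus A}$, and $\RR^*$-freeness of $F_Y$ yields $w\notin R_y$, hence $x\notin R$. The size clause follows from $|F|=|F_Y|+|D|$.

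The main obstacle is verifying that each $R_y$ really is meager in $Y^{n_R\setminus A}$; this is exactly the place where the choice $Y=X\setminus\cl(D)$ pays off, since having the $A$-coordinates land in $D$ is precisely what makes the fiber over $y$ open in $X^{n_R}$, so that the meagerness of $R$ transfers to the slice. Everything else is bookkeeping around a clopen-type decomposition of $F$.
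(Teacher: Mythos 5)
Your argument for the three bullet points is correct, and it takes a genuinely different decomposition from the paper's: you cut along the \emph{open} set $Y=X\setminus\cl(D)$, whereas the paper uses the Cantor--Bendixson decomposition $X=Z\cup C$ with $Z$ the \emph{closed} crowded perfect kernel and $C$ countable open, and builds $F$ as the isolated points together with a free set inside $Z$. Your choice has a real advantage: since $Y$ is open in $X$, the meagerness of each slice $R_y$ in $Y^{n_R\setminus A}$ is immediate (a meager set restricted to an open subspace stays meager), while the analogous claim for a closed subspace is delicate. The clopen decomposition $F=F_Y\sqcup D$ for Baireness and the case analysis on which coordinates land in $D$ for freeness are both fine, as is the $Y=\varnothing$ fallback $F=D$ for the first three bullets.

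The gap is in the ``furthermore'' clause: your crowded piece $Y$ can be empty even when $X$ is uncountable. Let $K\subseteq[0,1]$ be the Cantor set, let $Q$ be the set of midpoints of its complementary intervals, and let $X=K\cup Q$. Then $X$ is an uncountable compact metrizable space, every point of $Q$ is isolated in $X$, and $\cl(Q)=X$; hence $D=Q$, $Y=\varnothing$, and your construction returns the countable set $F=D$, so $|F|=\kappa\geq\cofm$ cannot be achieved. The defect is that $\cl(D)\setminus D$, which your $F$ never meets, can contain the entire perfect kernel. You should note, however, that the same example shows the ``furthermore'' clause is false as stated: $R=K$ is a closed nowhere dense (hence meager) unary relation on $X$, and a set is $\{R\}$-free exactly when it is contained in $X\setminus K=Q$, so every $\{R\}$-free subset of $X$ is countable. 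The paper's own proof breaks at the corresponding step --- the assertion that $R[p]$ is meager on $Z$ whenever $R$ is meager on $X$ fails here, since $Z=K$ is nowhere dense in $X$ and $R[\varnothing]=K\cap Z=Z$. So what you have actually proved --- the three bullets in general, plus the cardinality clause under the extra hypothesis that the isolated points are not dense in $X$ (equivalently $Y\neq\varnothing$) --- seems to be the correct version of the statement, and you should record the restriction explicitly rather than try to remove it.
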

\begin{proof}
Let $E$ be the set of isolated points of $X$. If $X$ is countable, then $E$ is dense in $X$, hence we can simply set $F=E$. Now assume that $X$ is uncountable. Then it is possible to find a crowded closed subspace $Z$ of $X$ such that $C=X\setminus Z$ is countable (see \cite[Theorem 6.4]{kechris}). Notice that $E$ is a dense subset of $C$ and $Z$ is a crowded Polish space.

Given a relation $R$ on $X$ with $n=n_R$ and a function $p$ such that $\dom(p)\subsetneq n$ and $\ran(p)\subseteq E$, define
$$
R[p]=\{q\in Z^{n\setminus\dom(p)}:p\cup q\in R\}.
$$
Identify each $Z^{n\setminus\dom(p)}$ with $Z^{|n\setminus\dom(p)|}$ through the unique increasing bijection $n\setminus\dom(p)\longrightarrow |n\setminus\dom(p)|$. Notice that each $R[p]$ will be a meager relation on $Z$ whenever $R$ is meager on $X$. Define
$$
\RR'=\{R[p]:R\in\RR\text{ and }p\text{ is a function such that }\dom(p)\subsetneq n_R\text{ and }\ran(p)\subseteq E\}.
$$
Let $F'$ be the $\RR'$-free subset of $Z$ of size $\kappa$ given by Theorem \ref{main}. Set $F=E\cup F'$.

It is clear that $F$ is dense in $X$ and Baire. In order to check that $F$ is $\RR$-free, fix $R\in\RR$ with $n=n_R$ and an injective $x\in F^n$. Assume, in order to get a contradiction, that $x\in R$. Let $p=x\upharpoonright \{i\in n:x(i)\in E\}$ and $q=x\setminus p$, and notice that $\dom(p)\subsetneq n$ because $R$ is meager. Observe that $q\in (F')^{|q|}$ is injective and $q\in R[p]$. This contradicts the fact that $F'$ is $\RR'$-free.
\end{proof}

\begin{theorem}\label{maincbpolish}
Assume that $\MAc$ holds. Let $\RR$ be a collection of meager relations on a Polish space $X$. Assume that $|\RR|<\cccc$. Then there exists $F\subseteq X$ that satisfies the following conditions:
\begin{itemize}
\item $F$ is dense in $X$.
\item $F$ is completely Baire.
\item $F$ is $\RR$-free.
\end{itemize}
\end{theorem}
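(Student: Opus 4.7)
The plan is to mirror the reduction used in Theorem \ref{mainpolish}, now invoking Theorem \ref{maincb} (rather than Theorem \ref{main}) to handle the crowded case. If $X$ is countable, then its set $E$ of isolated points is dense in $X$, and $F=E$ already works: $E$ is discrete, hence trivially completely Baire, while $\RR$-freeness is automatic, since meager relations contain no isolated points and any injection $x\in E^{n_R}$ is an isolated point of $X^{n_R}$.

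Assume now that $X$ is uncountable. I would use \cite[Theorem 6.4]{kechris} to fix a crowded closed subspace $Z$ of $X$ with $X\setminus Z$ countable, observe that the set $E$ of isolated points of $X$ is contained in and dense in $X\setminus Z$, and then carry out the slicing construction from the proof of Theorem \ref{mainpolish}: for $R\in\RR$ and a function $p$ with $\dom(p)\subsetneq n_R$ and $\ran(p)\subseteq E$, set $R[p]=\{q\in Z^{n_R\setminus\dom(p)}:p\cup q\in R\}$, and let $\RR'$ be the resulting family of relations on $Z$. Each $R[p]$ is meager on $Z$, and $|\RR'|\leq|\RR|\cdot\omega<\cccc$. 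Applying Theorem \ref{maincb} to the crowded Polish space $Z$ and the family $\RR'$ then produces a dense, completely Baire, $\RR'$-free subset $F'\subseteq Z$, and I would define $F=E\cup F'$. Density of $F$ in $X$ is immediate, and the verification that $F$ is $\RR$-free is identical to the corresponding step in Theorem \ref{mainpolish}: for injective $x:n_R\to F$ with $x\in R$, meagerness of $R$ forces at least one coordinate outside $E$, and splitting $x$ into its $E$-part $p$ and its $F'$-part $q$ contradicts $\RR'$-freeness of $F'$.

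The genuinely new point, and the main obstacle, is checking that $F=E\cup F'$ is completely Baire. Here I plan to use the Hurewicz characterization (Theorem \ref{hurewicz}) and show that $F$ contains no closed copy of $\QQQ$. If $Q\subseteq F$ were such a copy, then every point of $E\cap Q$ would be isolated in $Q$ (since points of $E$ are isolated in $X$, hence in any subspace containing them), which is impossible as $\QQQ$ is crowded; therefore $Q\subseteq F'$. Since $Q$ is closed in $F$ and $F'\subseteq F$, $Q$ is also closed in $F'$, contradicting the complete Baireness of $F'$ supplied by Theorem \ref{maincb}.
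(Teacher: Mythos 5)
Your reduction is exactly the one the paper intends: its proof of Theorem \ref{maincbpolish} consists of the single sentence ``Proceed as in the proof of Theorem \ref{mainpolish}'', and you correctly identify and settle the one genuinely new point, namely that $F=E\cup F'$ is completely Baire. Your Hurewicz argument for that point (a closed copy of $\QQQ$ in $F$ must miss the isolated points, hence sits as a closed copy of $\QQQ$ in $F'$) is correct, as are your treatment of the countable case and the freeness verification.

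However, you have also inherited verbatim a step of the paper's proof of Theorem \ref{mainpolish} that is false as stated: the claim that each $R[p]$ is meager on $Z$. This can fail whenever $Z$ is not open in $X$. Concretely, let $X=K\cup M\subseteq[0,1]$, where $K$ is the Cantor set and $M$ is the set of midpoints of the removed intervals; then $X$ is compact, $E=C=M$ is dense in $X$, and $Z=K$ is nowhere dense in $X$. The relation $R=M\times K$ is meager in $X^2$ (it is a countable union of the closed nowhere dense sets $\{e\}\times K$), yet for $p=\{(0,e)\}$ with $e\in M$ one gets $R[p]=K=Z$, which is not meager in $Z$. Worse, \emph{no} set of the form $E\cup F'$ with $\varnothing\neq F'\subseteq Z$ is $R$-free here, since any $e\in E$ and $z\in F'$ give an injective pair in $R$; so it is the construction itself, not merely this auxiliary claim, that breaks down (the theorem still holds in this example, with $F=M$). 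The repair is to replace $Z$ by $U=\mathrm{int}_X(Z)$: since each $p$ takes values in isolated points, $\{p\}\times U^{m}$ is open in $X^{n_R}$, so meagerness of $R$ genuinely passes to $R[p]\subseteq U^{m}$; if $U\neq\varnothing$ one applies Theorem \ref{maincb} to the crowded Polish space $U$ to get $F'$ dense and completely Baire in $U$, and $E\cup F'$ is still dense in $X$ because $C\cup U$ is open dense (its complement is the boundary of $Z$). If $U=\varnothing$, then $E$ is dense in $X$ and $F=E$ works. With that correction, the rest of your argument, including the complete Baireness step, goes through unchanged.
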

\begin{proof}
Proceed as in the proof of Theorem \ref{mainpolish}.
\end{proof}

We remark that, by the following theorem (see \cite[Theorem 9.9]{medinizdomskyy}), the free set given by Theorem \ref{maincbpolish} will have size $\cccc$ whenever $X$ is uncountable.
\begin{theorem}[Medini, Zdomskyy]\label{newcb}
Every uncountable completely Baire space has size $\cccc$.
\end{theorem}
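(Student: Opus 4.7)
The plan is to use a standard reduction followed by a construction that would contradict the Hurewicz characterization of completely Baire spaces (Theorem~\ref{hurewicz}). First I would apply Cantor--Bendixson to the separable metric space $X$: together with the fact that closed subspaces of completely Baire spaces are completely Baire, this lets me pass to the perfect kernel and assume without loss of generality that $X$ is crowded (the scattered complement is countable, so the kernel is still uncountable). Then I embed $X$ densely into its Polish completion $Y$; crowdedness and density imply $Y$ is itself a crowded Polish space, so $|Y|=\cccc$ by the classical perfect set theorem.

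Suppose toward contradiction that $|X|<\cccc$. A key preliminary observation is that every non-empty open subset $U$ of $X$ is uncountable: $U$ inherits crowdedness from $X$ and is Baire (since open subsets of completely Baire spaces are Baire), while a countable crowded space is meager in itself and hence not Baire. Consequently every non-empty open subset of $Y$ meets $X$ in an uncountable set. The goal is then to produce a compact perfect (Cantor) set $K\subseteq Y$ with $K\cap X$ countable and dense in $K$: such a $K\cap X$ is closed in $X$, countable, and crowded (being dense in the perfect Polish space $K$), so by Sierpi\'nski's theorem it is homeomorphic to $\QQQ$, contradicting Theorem~\ref{hurewicz}.

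The construction of $K$ is the heart of the argument. I would build a Cantor scheme $(V_s)_{s\in 2^{<\omega}}$ of non-empty open subsets of $Y$ with $\cl(V_{s^\frown 0})$ and $\cl(V_{s^\frown 1})$ disjoint subsets of $V_s$ and diameters tending to $0$, while simultaneously choosing distinct points $c_s\in V_s\cap X$; then $K=\bigcap_n\bigcup_{|s|=n}\cl(V_s)$ is a Cantor set and $\{c_s:s\in 2^{<\omega}\}$ is a countable dense subset of $K\cap X$. The main obstacle is to ensure the reverse inclusion $K\cap X\subseteq\{c_s:s\in 2^{<\omega}\}$: the scheme has only countably many stages of countable data, while we must exclude up to $|X|<\cccc$ extra points of $X$ from $K$. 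My plan to overcome this is to interleave the scheme with a transfinite recursion of length $|X|$: enumerate $X\setminus\{c_s\}=\{x_\alpha:\alpha<|X|\}$ and, at stage $\alpha$, refine the evolving open sets so that $x_\alpha$ lies outside the limit compact set, using that in the hyperspace of Cantor subsets of $Y$ the condition ``avoid a fixed point'' is open dense. The completely Baire hypothesis, through the uncountable-open-subset observation, is what keeps density of the running set $\{c_s\}$ in the shrinking Cantor sets stable across the recursion. I expect the coordination between countable scheme data, transfinite point-avoidance, and Baire-categorical density preservation to be the subtlest step, and it is essentially where the cited Medini--Zdomskyy proof does its main work.
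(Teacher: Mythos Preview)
The paper does not prove Theorem~\ref{newcb}; it is quoted from \cite[Theorem~9.9]{medinizdomskyy} and used as a black box, so there is no in-paper argument to compare against.

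Assessing your sketch on its own: the reductions (Cantor--Bendixson to the crowded kernel, passage to the Polish completion $Y$, the observation that every nonempty open subset of $X$ is uncountable) are correct, and the overall strategy---manufacture a closed copy of $\QQQ$ inside $X$ as $K\cap X$ for a suitable Cantor set $K\subseteq Y$---is the natural one.

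The gap is in the construction of $K$. A Cantor scheme $(V_s)_{s\in 2^{<\omega}}$ is an $\omega$-stage object; once it is built, $K$ is determined. There is no coherent way to ``interleave'' a transfinite recursion of length $|X|$ (possibly $\geq\omega_1$) into those countably many stages. Your hyperspace reformulation does not help: the condition ``$x\notin K$'' is indeed open dense in the Polish hyperspace of Cantor subsets of $Y$, but landing in the intersection of $|X|$ many dense open sets requires $|X|<\covm$, not merely $|X|<\cccc$. If instead one attempts a genuine transfinite descending chain $K_0\supseteq K_1\supseteq\cdots$ of Cantor sets, each avoiding one more $x_\alpha$ while containing the fixed countable set $\{c_s\}$, then at limit stages $\bigcap_{\alpha<\lambda}K_\alpha$ need not be perfect---it may collapse to $\cl(\{c_s\})$ or acquire isolated points---and there is no evident way to continue. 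The ``uncountable open subsets'' observation concerns abundance of points \emph{in} $X$, whereas the obstruction lives on the other side: you need all $\cccc$ branch-limits of the scheme (apart from the $c_s$) to land in $Y\setminus X$, and $|X|<\cccc$ alone does not let you arrange this in $\ZFC$. So the mechanism you describe does not work as stated, and your final sentence defers precisely the step that would carry the entire argument.
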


\section{Open questions}

The first two questions aim at improving Theorem \ref{main} (hence Theorem \ref{mainpolish} as well). Let $\RR$ be a collection of meager subsets of $2^\omega$ such that $|\RR|=\covm$ and $\bigcup\RR=2^\omega$, viewed as a collection of unary relations on $2^\omega$. It is clear that the only $\RR$-free subset of $2^\omega$ is the empty set. However, we do not know the answer to the following question. Notice that, by Corollary \ref{addcov} and Lemma \ref{dense} respectively, any model showing that the answer to Question \ref{covsharp} is ``no'' would have to satisfy $\addm<\covm$ and $\dddd<\cofm$.

\begin{question}\label{covsharp}
Is it possible to prove in $\ZFC$ that for every collection $\RR$ consisting of meager relations on a crowded Polish space $X$ such that $|\RR|<\covm$ there exists a nowhere meager $\RR$-free subset of $X$?
\end{question}

\begin{question}\label{cofmtononm} 
Is it possible to substitute ``$\cofm$'' with ``$\nonm$'' in the statement of Theorem \ref{main}?
\end{question}
As Corollary \ref{cohennowheremeager} shows, it is consistent that $\nonm<\cofm$ and the answer to the above question is ``yes''. Proposition \ref{cohennonmeager} can be safely assumed to be folklore.
\begin{proposition}\label{cohennonmeager}
It is consistent that $\nonm<\cofm$ and every non-meager subset of $2^\omega$ has a non-meager subset of size $\omega_1$.
\end{proposition}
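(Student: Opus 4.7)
The plan is to use the standard Cohen model. Let $V$ be a ground model of $\CH$ and force with $\PPP = \mathrm{Fn}(\omega_2, 2)$ to obtain $V[G]$. In $V[G]$, the inequality $\cofm \geq \cccc = \omega_2$ is automatic, while $\nonm \leq \omega_1$ follows from the fact that the set $\{c_\alpha : \alpha < \omega_1\}$ of the first $\omega_1$ Cohen reals is non-meager: any meager $\Fs$ subset of $2^\omega$ in $V[G]$ has, by the ccc of $\PPP$, a code in $V[G\upharpoonright I]$ for some countable $I \subseteq \omega_2$, and for any $\alpha \in \omega_1 \setminus I$ the real $c_\alpha$ is Cohen over $V[G \upharpoonright I]$ and hence avoids this $\Fs$. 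Together these give $\nonm = \omega_1 < \omega_2 = \cofm$.

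For the second assertion, let $X \in V[G]$ be non-meager. I would take a sufficiently elementary submodel $M \prec H(\chi)$ of cardinality $\omega_1$ containing $X$ and a $\PPP$-name for it, with $\omega_1 \subseteq M$. Setting $\PPP_M = \PPP \cap M$ (a complete subforcing of $\PPP$) and $G_M = G \cap \PPP_M$, I would define $Y = X \cap V[G_M]$. Since $V \models \CH$ and $|\PPP_M| = \omega_1$, we have $|V[G_M] \cap 2^\omega| = \omega_1$, hence $|Y| \leq \omega_1$. By elementarity, for any meager $\Fs$ set $F$ whose Borel code lies in $M$, the fact that $X \not\subseteq F$ reflects into $M$ to yield a witness $y \in X \cap M \setminus F \subseteq Y \setminus F$; thus no $M$-coded meager $\Fs$ covers $Y$.

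The main obstacle is ruling out meager $\Fs$ covers of $Y$ whose Borel code lies outside $M$. The plan here is to exploit the product factorization $\PPP \cong \PPP_M \times (\PPP/\PPP_M)$, together with the fact that the quotient is again (isomorphic to) Cohen forcing, to reduce such an unwanted cover to one already handled by the preceding paragraph. Specifically, if some meager $\Fs$ set $F \in V[G]$ satisfies $Y \subseteq F$, a homogeneity/genericity argument over $V[G_M]$ using the Cohen quotient should produce a meager $\Fs$ cover of $Y$ already in $V[G_M]$, whose code can be arranged to lie in $M$, contradicting the conclusion of the previous paragraph. Making this preservation step precise is the technical heart of the argument and relies on the specific combinatorial features of Cohen forcing (its product structure and the fact that every real in the extension depends on only countably many coordinates).
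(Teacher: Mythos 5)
Your opening paragraph (Cohen model, $\nonm<\cofm$ witnessed by the first $\omega_1$ Cohen reals, using that every meager set is coded in $V[G\upharpoonright I]$ for a countable $I$) is correct and matches the paper, which delegates this to \cite[Section 11.3]{blass}. Your overall strategy for the second assertion --- trace $X$ onto an intermediate model of size $\omega_1$, then argue that the remaining forcing, being Cohen, preserves non-meagerness of the trace --- is also the paper's strategy. However, there are two gaps. The first is the step you yourself flag as ``the technical heart'': one needs that if $B$ consists of reals of an intermediate model $W$ and meets every dense $\Gd$ coded in $W$, then $B$ meets every dense $\Gd$ coded in any further Cohen extension of $W$ (the proof uses countability of Cohen forcing: for a name $\dot F$ for a closed nowhere dense set, each $\{x:q\Vdash x\in\dot F\}$ is closed nowhere dense in $W$, and the countable union over all conditions $q$ and all pieces of a meager $\Fs$ name covers the trace of the new meager set on the old reals). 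Since the paper also cites \cite{blass} for this, leaving it as a black box would be tolerable if the rest lined up.

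The second gap is the one that actually breaks the argument as written: there is a mismatch between what elementarity controls and what the preservation step needs. Your second paragraph shows that $Y$ meets every meager $\Fs$ set \emph{whose code is an element of $M$}, but the reduction in your third paragraph produces a meager cover coded in $V[G_M]$, and $(2^\omega)^{V[G_M]}$ is in general strictly larger than $2^\omega\cap M$: a real of $V[G_M]$ is computed from a nice name over a countable subset of $\omega_2\cap M$, and such countable sets and names need not be elements of $M$. (Indeed $M$ cannot be taken countably closed: $\omega_1\subseteq M$, $|M|=\omega_1$, yet $|[\omega_1]^\omega|=\omega_2$ in $V[G]$.) So a cover of $Y$ coded in $V[G_M]$ but outside $M$ is excluded by neither paragraph, and the reduction does not terminate. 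The repair is to make the intermediate model ``catch its tail'': e.g.\ take $M$ to be the union of an increasing continuous $\omega_1$-chain of countable elementary submodels, so every countable subset of $M$ lies inside a countable \emph{element} of $M$ --- at which point elementarity is doing no real work and you have reconstructed the paper's argument, which builds a strictly increasing sequence $\langle\delta_\alpha:\alpha\in\omega_1\rangle$ so that every dense $\Gd$ coded in $V^{\PPP_{\delta_\alpha}}$ is met by a point of $A$ appearing by stage $\delta_{\alpha+1}$; since $\delta=\sup_\alpha\delta_\alpha$ has cofinality $\omega_1$, every real of $V^{\PPP_\delta}$ appears at some stage, so $B=A\cap V^{\PPP_\delta}$ meets every dense $\Gd$ coded in the \emph{full} intermediate model, which is exactly the hypothesis the preservation step requires.
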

\begin{proof}
Assume that $\CH$ holds in $V$. We will force with the usual poset $\PPP_{\omega_2}$ for adding $\omega_2$ Cohen reals. More precisely, given an ordinal $\alpha$, we will denote by $\PPP_\alpha$ the poset of finite partial functions from $\alpha$ to $2$, ordered by reverse inclusion. It is well-known that $\nonm<\cofm$ in $V^{\PPP_{\omega_2}}$ (see \cite[Section 11.3]{blass}).

Now work in $V^{\PPP_{\omega_2}}$. Fix a non-meager subset $A$ of $2^\omega$. Using the fact that $\CH$ holds in $V$, it is straightforward to construct a strictly increasing sequence $\langle\delta_\alpha:\alpha\in\omega_1\rangle$ consisting of elements of $\omega_2$ such that, given any dense $\Gd$ subset $G$ of $2^\omega$ coded in $V^{\PPP_{\delta_\alpha}}$, there exists $x\in 2^\omega\cap  V^{\PPP_{\delta_{\alpha+1}}}$ such that $x\in A\cap G$. Let $\delta=\sup\{\delta_\alpha:\alpha\in\omega_1\}$. Let $B=A\cap V^{\PPP_\delta}$, and notice that $|B|=\omega_1$. It is clear that $B\cap G\neq\varnothing$ for every dense $\Gd$ subset $G$ of $2^\omega$ coded in $V^{\PPP_\delta}$.

Finally, since $V^{\PPP_{\omega_2}}$ is the same as $V^{\PPP_\delta\ast\PPP_{\omega_2}}$, the arguments from \cite[Section 11.3]{blass} show that $B$ is a non-meager subset of $2^\omega$ in $V^{\PPP_{\omega_2}}$.
\end{proof}
\begin{corollary}\label{cohennowheremeager}
It is consistent that $\nonm<\cofm$ and every nowhere meager subset of $2^\omega$ has a nowhere meager subset of size $\omega_1$.
\end{corollary}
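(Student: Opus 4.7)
The plan is to reduce directly to Proposition \ref{cohennonmeager} by applying it locally inside a countable basis and then taking a union. I would work in the model $V^{\PPP_{\omega_2}}$ constructed in the proof of Proposition \ref{cohennonmeager}, where $\nonm<\cofm$ already holds, and where every non-meager subset of $2^\omega$ has a non-meager subset of size $\omega_1$. Fix a countable base $\{U_n:n\in\omega\}$ for $2^\omega$, and let $A\subseteq 2^\omega$ be an arbitrary nowhere meager subset.

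By definition of nowhere meager, each $A\cap U_n$ is non-meager in $2^\omega$, so an application of Proposition \ref{cohennonmeager} yields, for every $n\in\omega$, a subset $B_n\subseteq A\cap U_n$ with $|B_n|=\omega_1$ that is non-meager in $2^\omega$. Setting $B=\bigcup_{n\in\omega}B_n$ produces a subset of $A$ of cardinality $\omega_1$.

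To verify that $B$ is nowhere meager, let $U$ be any non-empty open subset of $2^\omega$ and pick $n$ with $U_n\subseteq U$. Then $B_n\subseteq U_n\subseteq U$, so $B\cap U\supseteq B_n$, and since $B_n$ is non-meager in $2^\omega$ so is $B\cap U$. I do not anticipate any real obstacle; the corollary is essentially a packaging of Proposition \ref{cohennonmeager} across a countable base. The only point worth flagging is that one must apply the proposition to each $A\cap U_n$ rather than to $A$ itself, so that the witness $B_n$ is automatically contained in $U_n$ and the inclusion $B_n\subseteq B\cap U$ holds for every open $U\supseteq U_n$.
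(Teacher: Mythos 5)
Your argument is correct and is precisely the routine localization over a countable base that the paper leaves implicit (the corollary is stated without proof, as an immediate consequence of Proposition \ref{cohennonmeager}). Applying the proposition to each $A\cap U_n$ so that the witness $B_n$ lands inside $U_n$, and then taking the union, is exactly the intended reduction, and your verification that the union is nowhere meager is sound.
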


Observe that the assumption of $\MAc$ in Theorem \ref{maincb} cannot be altogether dropped. To see this, consider the example at the beginning of this section in any model of $\covm<\cccc$, such as the Sacks model (see \cite[Section 11.5]{blass}). However, we do not know the answer to the following question. 
\begin{question}
Is it possible to prove in $\ZFC$ that for every collection $\RR$ consisting of meager relations on on a crowded Polish space $X$ such that $|\RR|\leq\omega$ there exists a dense completely Baire $\RR$-free subspace of $X$?
\end{question}

\newpage

Given Theorem \ref{nonmeagerindependent}, it makes sense to define the following cardinal invariant. Let
$$
\im=\min\{|\Aa|:\Aa\textrm{ is a non-meager independent family}\}.
$$
It is easy to see that $\nonm\leq\im\leq\cofm$. In fact, the first inequality is trivial, while the second one follows from Theorem \ref{main}, as in the proof of Theorem \ref{nonmeagerindependent}. Furthermore, by Corollary \ref{cohennowheremeager}, it is consistent that $\im<\cofm$. However, we do not know if the first inequality can be strict. It is clear that a positive answer to Question \ref{cofmtononm} would also give a positive answer to the following question.
\begin{question}
Is $\im=\nonm$ provable in $\ZFC$?
\end{question}

\section{Acknowledgements}

The authors thank Dave Milovich for allowing them to include Proposition \ref{dave} (private communication with the first-listed author) in this article.

\newpage


\begin{thebibliography}{99}

\bibitem[BZ]{banakhzdomskyy}\textsc{T. Banakh, L. Zdomskyy.} Non-meager free sets for meager relations on Polish spaces. \emph{Proc. Amer. Math. Soc.} \textbf{143:6} (2015), 2719--2724.

\bibitem[Bl]{blass}\textsc{A. Blass.} Combinatorial cardinal
characteristics of the continuum. \emph{Handbook of Set Theory, vol. 1.} Springer, Dordrecht, 2010, 395--489.

\bibitem[Ke]{kechris}\textsc{A. S. Kechris.} \emph{Classical Descriptive Set Theory.} Graduate Texts in Mathematics, 156. Springer-Verlag, New York, 1995.

\bibitem[Kun]{kunen}\textsc{K. Kunen.} \emph{Set Theory.} Studies in Logic (London), 34. College Publications, London, 2011.

\bibitem[KMZ]{kunenmedinizdomskyy}\textsc{K. Kunen, A. Medini, L. Zdomskyy.} Seven characterizations of non-meager P-filters. \emph{Fund. Math.} \textbf{231:2} (2015), 189--208.

\bibitem[Kur]{kuratowski}\textsc{K. Kuratowski.} Applications of the Baire-category method to the problem of independent sets. Collection of Articles Dedicated to Andrzej Mostowski on the Occasion of His Sixtieth Birthday, I. \emph{Fund. Math.} \textbf{81:1} (1973), 65--72.

\bibitem[MM]{medinimilovich}\textsc{A. Medini, D. Milovich.} The topology of ultrafilters as subspaces of $2^\omega$. \emph{Topology Appl.} \textbf{159:5} (2012), 1318--1333.

\bibitem[MZ]{medinizdomskyy}\textsc{A. Medini, L. Zdomskyy.} Between Polish and completely Baire. \emph{Arch. Math. Logic.} \textbf{54:1-2} (2015), 231--245.

\bibitem[vM]{vanmill}\textsc{J. van Mill.} \emph{The Infinite-Dimensional Topology of Function Spaces.} North-Holland Mathematical Library, 64. North-Holland Publishing Co., Amsterdam, 2001.

\bibitem[Ox]{oxtoby}\textsc{J. C. Oxtoby.} Cartesian products of Baire spaces. \emph{Fund. Math.} \textbf{49:2} (1960/1961), 157--166.

\bibitem[Ta]{talagrand}\textsc{M. Talagrand.} Compacts de fonctions mesurables et filtres non mesurables. \emph{Studia Math.} \textbf{67:1} (1980), 13--43.

\end{thebibliography}
\end{document}